\documentclass{amsproc}
\usepackage[utf8]{inputenc}
\usepackage[T1]{fontenc}
\usepackage{lmodern}
\usepackage{graphicx,amsmath,amsfonts,latexsym,amsthm,amssymb,mathrsfs,mathtools,stmaryrd}
\usepackage[abs]{overpic}
\usepackage[dvipsnames]{xcolor}
\usepackage[linktocpage=true]{hyperref}
\usepackage{caption}
\usepackage{dsfont}
\usepackage{tikz-cd}
\usepackage{comment}
\usepackage{enumitem}
\usepackage{microtype}
\numberwithin{equation}{section}

\usepackage{graphpap}
\usepackage{svg}
\newtheorem{thm}{Theorem}[section]
\newtheorem{cor}[thm]{Corollary}
\newtheorem{lem}[thm]{Lemma}
\newtheorem{question}[thm]{Question}
\newtheorem{prop}[thm]{Proposition}
\newtheorem{conj}[thm]{Conjecture}
\newtheorem*{conj*}{Conjecture}

\newtheorem{defn}[thm]{Definition}
\newtheorem{ex}[thm]{Example}

\newtheorem*{thm*}{Theorem}
\newtheorem*{cor*}{Corollary}

\newcommand{\Z}{\mathbb{Z}}
\newcommand{\R}{\mathbb{R}}

\newcommand{\Q}{\mathbb{Q}}

\title{Taut foliations through a contact lens}

\author{Thomas Massoni}
\address{Department of Mathematics, Stanford University, 450 Jane Stanford Way, Building 380, Stanford, USA}
\email{tmassoni@stanford.edu}

\subjclass[2020]{Primary 57R30, 57K33, 57M50; Secondary 37C10, 37D20, 57K31.}

\begin{document}

\begin{abstract}
This survey explores the rich interplay between (taut) foliations and (tight) contact structures in dimension three, highlighting recent work of the author. We outline a new method for constructing taut foliations from suitable pairs of contact structures, and discuss some applications and future research directions. In particular, we give a brief account of work in progress with Jonathan Zung on \emph{ziggurats} for taut foliations transverse to pseudo-Anosov flows, and we propose a (mostly speculative) contact perspective on the $L$-space conjecture.
\end{abstract}

\maketitle

\tableofcontents

\section{Introduction}

Foliations and contact structures are fundamental geometric structures on $3$-manifolds, which can both be defined in terms of suitable plane fields. While foliations correspond to integrable plane fields, tangent to $2$-dimensional surfaces or leaves partitioning the manifold, contact structures are maximally nonintegrable, very ``twisty'' plane fields which are nowhere tangent to surfaces. 

In dimension three, both foliation theory and contact topology exhibit a rich interaction with low-dimensional topology, dynamics, and geometry. Of particular interest are the ``rigid'' objects, namely \emph{taut} foliations and \emph{tight} contact structures, which carry strong topological information and play a central role in various applications to dynamical systems, knot theory, gauge theory, and so on.

While foliations and contact structures are fundamentally different objects, there exist many striking parallels between the two theories: existence results, $h$-principles, the ``flexible versus rigid'' dichotomy, the sutured theory, to cite a few. We refer the reader to the beautiful survey~\cite{CH20} by Colin and Honda for an overview of these interactions. This naturally prompts the question: can one construct contact structures from foliations, and vice versa, and establish a direct correspondence between the two worlds?

The first general result connecting foliations and contact structures was obtained by Eliashberg and Thurston in~\cite{ET}: any suitable foliation in dimension $3$ can be approximated by contact structures, in the sense that its tangent plane field can be perturbed into contact structures by some arbitrarily small $C^0$ modification. Moreover, contact approximations to taut foliations are automatically tight, yielding a very general construction of tight contact structures in dimension $3$. This result led to many other breakthroughs, such as the proof by Ozsváth and Szabó that closed $3$-manifolds carrying coorientable taut foliations are not Heegaard Floer $L$-spaces, namely, their Heegaard Floer homology is ``complicated''.

The connection in the other direction, from contact structures to foliations, was initiated by Mitsumatsu~\cite{M95} and Eliashberg--Thurston~\cite{ET} in the special case of contact structures coming from Anosov flows, and by Colin and Firmo~\cite{CF11} in a more general setting. In~\cite{Mas24}, the author established a complete converse to the Eliashberg--Thurston theorem, extending the work of Colin--Firmo, and constructing foliations from contact structures in full generality. The key input is a \emph{pair} of contact structures with opposite signs, in a suitable geometric position with respect to each other---those can be thought of as the positive and negative ``contact shadows'' of the foliation.

This contact viewpoint on foliation theory provides some degree of flexibility, as contact structures are easier to deform, and allows us to perform some surgery operations that are not directly available for foliations. Those techniques are particularly powerful for studying foliations transverse to a fixed flow on a manifold with boundary, as it is more natural to modify a contact structure near the boundary components than a foliation.

The correspondence between foliations and contact pairs also offers a new perspective on the \emph{$L$-space conjecture}. Since contact structures are more closely related to gauge theory than foliations, a reasonable strategy would be to first attempt to construct suitable contact structures under relevant gauge-theoretic hypotheses, and then use them to produce taut foliations. This program is for now mostly conjectural, and requires a robust geometric theory of \emph{pairs} of contact structures which is yet to be fully developed.

    \subsection*{Acknowledgments}

I would like to thank Jonathan Bowden and Jonathan Zung for numerous conversations about foliations and contact structures, and for teaching me most of what I know on these topics. I am very grateful to the 2025 Georgia International Topology Conference for giving me the opportunity to present my research and to contribute to its proceedings. I would also like to thank the anonymous referee for their helpful comments and suggestions. I am supported by a Stanford Science Fellowship.

\section{Foliations and contact structures}

In this survey, we are mainly interested in two types of structures on $3$-manifolds: foliations and contact structures. Both of them can be defined in terms of plane fields, i.e., rank $2$ distributions on the manifold.

    \subsection{Plane fields}

We begin with a general discussion of $2$-dimensional plane fields on $3$-manifolds.

\subsubsection{Definition}

A \textbf{plane field} on $M$ is the data of a $2$-dimensional subspace $\eta_p \subset T_p M$ for each $p \in M$. It can be described \emph{locally} as the span of two linearly independent vector fields, or dually, as the kernel of a $1$-form $\alpha$: $\eta_p = \mathrm{ker}(\alpha_p)$. It is of class $C^k$, $k \in \Z_{\geq 0}$, if such an $\alpha$ can be chosen to be $C^k$ (at least locally). A continuous plane field $\eta$ is \textbf{(co)orientable} if there exist consistent choices of (co)orientations for the $\eta_p$, $p \in M$. Note that $\eta$ is coorientable if and only if there exists a \emph{globally} defined (continuous or smooth) vector field $Z$ transverse to $\eta$, or dually, if there exists a globally defined $1$-form $\alpha$ as before. If $M$ itself is orientable, then $\eta$ is orientable if and only if it is coorientable.

\subsubsection{Tangent and transverse surfaces}

Let $\eta$ be a continuous plane field on a $3$-manifold $M$. An immersion $f:\Sigma \rightarrow M$ defined on a (smooth) surface $\Sigma$ is \textbf{tangent to $\eta$} if for every $z \in \Sigma$, we have $\mathrm{range}(d_zf) = \eta_{f(z)}$. It is \textbf{transverse to $\eta$} if for every $z \in \Sigma$, $\mathrm{range}(d_zf) \pitchfork \eta_{f(z)}$. In that case, the formula $\ell_z \coloneqq (d_zf)^{-1}\big(\eta_{f(z)}\big)$ for $z \in \Sigma$ defines a continuous line field on $\Sigma$ called the \textbf{characteristic line field} of $f$ for $\eta$.

\subsubsection{Tautness}

The notion of \emph{tautness} is usually defined for foliations, but we slightly extend this definition to continuous plane fields. We say that a continuous, cooriented plane field $\eta$ on $M$ is \textbf{taut} if for every $p \in M$, there exists a (smooth) loop $\gamma : S^1 \rightarrow M$ based at $p$ which is transverse to $\eta$. We say that a codimension-$0$ submanifold $N \subset M$ with $C^1$ boundary is a \textbf{dead-end component} of $\eta$ if the boundary components of $N$ are all tangent to $\eta$, and the coorientation of $\eta$ is pointing \emph{inwards} along all of those boundary components. 

As for foliations, there are many equivalent characterizations of tautness for continuous plane fields:

\begin{prop}[See~\cite{Mas24}, Proposition B.3] \label{prop:tautequiv}
Assume that $M$ is oriented. Let $\eta$ be a (co)oriented continuous plane field on $M$. Then the following properties are equivalent:
    \begin{enumerate}
        \item $\eta$ is taut.
        \item For all $p, q \in M$, there exists a smooth path from $p$ to $q$ in $M$ which is transverse to $\eta$.
        \item $\eta$ has no dead-end components.
        \item There exists a (smooth) \emph{closed} $2$-form $\omega$ on $M$ such that $\omega_{\vert \eta} > 0$, i.e., $\omega$ is nondegenerate along $\eta$ and compatible with the orientation on $\eta$.
        \item There exists a (smooth) volume preserving vector field on $M$ which is transverse to $\eta$.
    \end{enumerate}    
\end{prop}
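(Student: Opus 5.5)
The plan is to prove the cycle of implications $(2)\Rightarrow(1)\Rightarrow(3)\Rightarrow(2)$, which handles the ``combinatorial'' characterizations. Separately, I will prove $(1)\Leftrightarrow(5)$ and $(4)\Leftrightarrow(5)$, with the Hahn--Banach (Sullivan-type) duality argument carrying the hard direction. I assume throughout that $M$ is closed, as is implicit in the notion of tautness. The implication $(2)\Rightarrow(1)$ is trivial: take $p=q$.

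For $(1)\Rightarrow(3)$, suppose $N$ is a dead-end component and $p\in\operatorname{int}N$. A transverse loop through $p$ must exit $N$, but it can only cross $\partial N$ in the direction of the coorientation, which points inwards. This is a contradiction.

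For $(3)\Rightarrow(2)$, fix $p$ and let $U_p$ be the set of points reachable from $p$ by positively transverse paths. The set $U_p$ is open, since transverse paths can be extended and perturbed in flow boxes for a transverse vector field. The key step is to show that $\overline{U_p}$ is a codimension-$0$ region whose frontier is tangent to $\eta$, with the coorientation pointing inwards. This holds because at a frontier point one can escape $U_p$ only by going against $\eta$. Hence $\overline{U_p}$ would be a dead-end component unless $U_p=M$. The main technical obstacle is the regularity of the frontier: $\eta$ is only continuous, so one must show the frontier is a union of $C^1$ surfaces tangent to $\eta$. I would argue locally, in a chart where $\eta$ is the graph of a continuous slope field. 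There the reachable set from a side is the region above a ``lowest'' surface obtained as an envelope of integral curves. I expect to need the extremal-integral-surface arguments of Proposition B.3 of~\cite{Mas24}, or to relax the dead-end notion and then smooth.

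For $(5)\Rightarrow(4)$, let $X$ preserve a volume form $\Omega$ and set $\omega=\iota_X\Omega$. Then $d\omega=\mathcal L_X\Omega=0$, and $\omega|_\eta>0$ because $X\pitchfork\eta$ is positively transverse. Conversely, for $(4)\Rightarrow(5)$, suppose $\omega$ is closed and positive on $\eta$. Then $\omega$ is nowhere zero, so $\ker\omega$ is a line field transverse to $\eta$. Oriented by the coorientation, it is spanned by a vector field $X$, and we may write $\omega=\iota_X\Omega$ for a unique volume form $\Omega$. Since $d\omega=0$, $X$ preserves $\Omega$.

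For $(5)\Rightarrow(1)$, Poincaré recurrence for the volume-preserving flow of $X$ gives, for a.e.\ point, orbits returning arbitrarily close to themselves. Closing them up inside a flow box yields transverse loops through a dense set of points. Openness of the condition then gives transverse loops through every point, using a short transverse path from $p$ into the recurrent set and back, which exists by local flow boxes. For $(1)\Rightarrow(4)$, I use Sullivan's argument. Consider the cone $C$ of foliation currents, i.e.\ limits of positive combinations of $2$-vectors in $\eta$. Tautness gives, through each point, a closed transversal $\gamma$ with $\int_\gamma$ pairing positively against every element of $C$ supported near it. Summing finitely many such loops by compactness, no nonzero element of $C$ is a boundary. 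The Hahn--Banach theorem then provides a closed $2$-form positive on $C$. Smoothing preserves this, since positivity is an open condition, which yields (4).
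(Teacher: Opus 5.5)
Your decomposition covers all five items, and $(2)\Rightarrow(1)$ and $(4)\Leftrightarrow(5)$ are fine. The gap is in $(3)\Rightarrow(2)$, which is the step carrying the real content for a merely continuous $\eta$. Everything there hinges on $\partial U_p$ being a $C^1$ surface tangent to $\eta$, and you defer this to ``the extremal-integral-surface arguments of Proposition B.3 of~\cite{Mas24}'', which is the statement under proof. No integral surfaces of $\eta$ are needed, and in general there are none to start from (e.g.\ where $\eta$ is contact): forward invariance of $U_p$ alone forces tangency. Take $q\in\partial U_p$ and a chart $(x,y,z)$ centered at $q$ in which $\partial_z$ is positively transverse and $\eta=\ker(dz-a\,dx-b\,dy)$, with $a,b$ continuous and $|a|,|b|\le L$. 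Moving up along $\partial_z$ is positively transverse, so $U_p=\{z>h(x,y)\}$ near $q$. Straight segments climbing with slope more than $2L$ are positively transverse, so $h$ is Lipschitz. Now fix $(x_0,y_0)$, a planar unit vector $v$ and $\epsilon>0$, and let $s_0$ be $a v_1+b v_2$ evaluated at $(x_0,y_0,h(x_0,y_0))$. For small $t>0$, continuity of $a,b$ makes the straight segment from $(x_0,y_0,h(x_0,y_0)+\delta)$ to $\big((x_0,y_0)+tv,\,h(x_0,y_0)+\delta+t(s_0+\epsilon)\big)$ positively transverse, which gives $h((x_0,y_0)+tv)\le h(x_0,y_0)+t(s_0+\epsilon)$. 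Running the same planar segment backwards, starting just above the graph at $(x_0,y_0)+tv$ and descending at rate $s_0-\epsilon$, is also positively transverse, and gives $h((x_0,y_0)+tv)\ge h(x_0,y_0)+t(s_0-\epsilon)$. Hence $h$ has continuous directional derivatives given by $dh=a\,dx+b\,dy$ along its graph, so $h$ is $C^1$ and $\partial U_p$ is a closed $C^1$ surface tangent to $\eta$, with the coorientation pointing into $\overline{U_p}$. Thus $\overline{U_p}$ is a dead-end component unless $\partial U_p=\emptyset$, in which case $U_p=M$ by connectedness.

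There are also two smaller points. First, in $(1)\Rightarrow(3)$ the claim that a transverse loop through $p\in\operatorname{int}N$ must exit $N$ is false. The core of a Reeb component whose boundary torus is cooriented inwards is a closed transversal lying inside a dead-end component. Take $p\in\partial N$ instead: the loop enters $\operatorname{int}N$ right after $p$ and reaches $p$ from outside $N$, so it would have to cross $\partial N$ outwards somewhere. Second, two of your routes are heavier than necessary. In $(1)\Rightarrow(4)$, the natural way to make ``$\gamma$ pairs positively with $C$'' precise is the Thom form $\omega_\gamma$: a bump area form on $D^2$ pulled back to a thin tube $D^2\times S^1$ around an embedded $\gamma$. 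It is closed, nonnegative on $\eta$ and positive near $\gamma$, and a finite sum of such forms already is the required $\omega$. So Hahn--Banach is redundant, and if you keep it, smoothing must preserve closedness, not just positivity. Likewise, $(4)\Rightarrow(3)$ is immediate from Stokes. On a dead-end component $N$, the $\eta$-orientation of $\partial N$ differs from the boundary orientation by the same sign on every component, so $\int_{\partial N}\omega\neq0=\int_N d\omega$. Together with $(5)\Rightarrow(4)$ and $(3)\Rightarrow(2)\Rightarrow(1)$, this replaces your Poincar\'e recurrence argument for $(5)\Rightarrow(1)$.
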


Here, we say that a vector field $Z$ is \textbf{volume preserving} if there exists a (smooth) volume form $\mathrm{dvol}$ on $M$ preserved by the flow of $Z$, i.e., satisfying $\mathcal{L}_Z \mathrm{dvol} = 0$. Equivalently, this last equality means that the $2$-form $\omega = \iota_Z \mathrm{dvol}$ is \emph{closed}.

It immediately follows from item 4 or item 5 that tautness is an \emph{open condition} in the $C^0$ topology for continuous plane fields. Moreover, by item 3, a continuous oriented plane field without closed tangent surfaces is automatically taut.

    \subsection{Foliations}

In this article, we will focus on foliations admitting a well-defined, continuous tangent plane field. We give a slightly nonstandard account of the theory, and refer to Calegari's book~\cite{C07} and Candel--Conlon's encyclopedia~\cite{CC1,CC2} for much more on foliations. Let us first recall some basic definitions.

\subsubsection{Three shades of integrability}

Let $\eta$ be a continuous plane field on a closed $3$-manifold $M$. 

We say that $\eta$ is \textbf{locally integrable} if for every $p \in M$, there exists an embedded surface $\Sigma \hookrightarrow M$ passing through $p$ which is tangent to $\eta$. In this definition, we can assume that $\Sigma$ is a disk. 

Furthermore, we say that $\eta$ is \textbf{uniquely integrable near $p \in M$} if the images of any two tangent surfaces passing through $p$ coincide near $p$. Then, we say that $\eta$ is \textbf{uniquely integrable} if it is uniquely integrable near every point $p \in M$. 

We say that $\eta$ is \textbf{integrable}, or \textbf{tangent to a foliation} if there exists a partition of $M$ into injectively (but not necessarily properly!) immersed surfaces 
\begin{align} \label{eq:partition}
    M = \bigsqcup_{i \in I} L_i 
\end{align}
such that every $L_i$, $i \in I$, is tangent to $\eta$ and is \emph{maximal} in the following sense: if there is another injectively immersed surface $L_i \subset L \subset M$ tangent to $\eta$ containing $L_i$, then $L_i = L$. Alternatively, this means that $L_i$ is \emph{complete} for the pullback of some/any Riemannian metric on $M$. It follows that this decomposition is \emph{locally trivial}, in the sense that near every $p \in M$, there exist smooth local coordinates $(x, y, z) \in (-1,1)^3$ so that $\eta$ is transverse to $\partial_z$ in these coordinates, and the intersection of each $L_i$ with a smaller neighborhood of $p$ is a disjoint union of graphs of $C^1$ maps $(-\delta,\delta)^2_{x,y} \rightarrow (-1,1)_z$.

One has:

\begin{lem}
    If $\eta$ is a continuous, uniquely integrable plane field on a closed $3$-manifold $M$, then it is tangent to a foliation.
\end{lem}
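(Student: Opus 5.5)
The idea is to build the leaves as equivalence classes of a "plaque-chain" relation, and to use unique integrability to show that the resulting partition is a foliation.

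\emph{Step 1: local charts.} Around each $p \in M$, I choose smooth coordinates $(x,y,z) \in (-1,1)^3$ in which $\eta$ is transverse to $\partial_z$. Then $\eta$ is the kernel of $dz - a\,dx - b\,dy$ with $a,b$ continuous. A surface tangent to $\eta$ that is transverse to $\partial_z$ is locally the graph of a $C^1$ function $u$ satisfying $u_x = a(x,y,u)$ and $u_y = b(x,y,u)$. Local integrability gives, through every point $q$ near $p$, such a graph $D_q$ defined over a small square.

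Unique integrability near every point says that two such graphs through a common point agree near that point. A connectedness argument upgrades this to: two tangent graphs over a common connected domain that share one point coincide on the whole domain. Indeed, the set where they agree is closed by continuity and open by unique integrability.

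\emph{Step 2: a uniform plaque size.} I will then show that, after shrinking the chart, the graphs through points of a smaller box can all be taken over a common square $(-\delta,\delta)^2$. This is the main obstacle. Since $a,b$ are only continuous, Peano-type existence gives no Lipschitz control, so the uniform size has to come from a compactness/continuation argument.

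My plan is as follows. Start from a tangent graph $u$ through $q$ and try to extend it along rays in the $(x,y)$-plane. Uniqueness makes the extension along each ray well defined. The slope bound $|u_x|,|u_y| \le C$, where $C$ bounds $|a|,|b|$, keeps the graph inside the box over a square of side of order $1/C$. Maximal extension within the box then yields a graph over the full square: the domain is open by local integrability, and it is closed because the bounded slopes give a $C^1$ limit which is again tangent.

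Distinct graphs are disjoint, by the agreement property from Step 1. Hence the box is partitioned into plaques. The plaque through $q$ depends continuously on $q$, by closedness of the tangency condition, compactness of equicontinuous families (Arzelà--Ascoli), and uniqueness.

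\emph{Step 3: global leaves.} I define $q \sim q'$ if there is a finite chain of plaques, each meeting the next, joining $q$ to $q'$. Each equivalence class $L$ gets the topology generated by open subsets of plaques. Compatibility of the charts follows from Step 1: where two plaques from different charts meet, they coincide near the intersection. So $L$ is a connected surface, second countable because the closed manifold $M$ is covered by finitely many charts and each leaf meets each box in at most countably many plaques, which is the standard argument. It is injectively immersed and tangent to $\eta$, and the decomposition is locally trivial by construction.

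Finally, for maximality: if $L \subset L'$ with $L'$ a connected injectively immersed tangent surface, then every point of $L'$ lies in a piece of $L'$ contained in a plaque, by uniqueness. Connectedness of $L'$ then forces $L' \subset L$.
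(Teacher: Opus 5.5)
Your plaque-chain route is sound in outline, but Step 2 has a genuine gap, at exactly the point you flagged. The claim that the maximal domain is ``closed'' is where it breaks. Let $u$ be the maximal tangent graph over a star-shaped domain $U$, and let $P$ be a frontier point reached along a ray. The slope bound gives a limit height $z_P$, and local integrability gives a tangent graph $v$ through $Q=(P,z_P)$ over a disk around $P$. To continue $u$ past $P$ you must show $u=v$ on the part of $U$ near $P$.

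\begin{itemize}
\item Unique integrability gives nothing here, because $Q$ is not on the graph of $u$ and the hypothesis only compares surfaces through a common point.
\item Step 1 only tells you that $u-v$ has constant sign on each component of the overlap. Nothing you wrote rules out $u>v$ there with $u-v\to 0$ at $P$.
\item Arzel\`a--Ascoli cannot close this. It needs tangent graphs of \emph{uniform} size through points converging to $Q$, which is the statement being proved.
\item You also cannot argue ray by ray. The characteristic ODE of $\eta$ on a vertical plane can fail uniqueness even when $\eta$ is uniquely integrable.
\end{itemize}

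Here is an example of the last point. On $\R^3$, let $\eta$ be the tangent plane field of the graphs $z=\phi(x,y,c)\coloneqq\tfrac14(x-c)\lvert x-c\rvert-cy^2$, $c\in\R$. These partition $\R^3$, since $\phi$ is strictly decreasing in $c$ with range $\R$. Write a tangent graph as $z=w(x,y)=\phi(x,y,C(x,y))$. Off $\{y=0\}$ one has $\partial_c\phi\neq 0$, and tangency forces $\nabla C=0$ there. So $C$ is constant, and every tangent surface lies in one of these leaves. Yet on $\{y=0\}$ the characteristic line field is $\partial_x+\lvert z\rvert^{1/2}\partial_z$, and the $x$-axis is tangent to $\eta$ while meeting every leaf. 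Well-definedness of your extension must therefore come from star-shaped domains, whose pairwise intersections are connected, not from uniqueness along rays. As a result, the uniform plaques, their continuous dependence, and the local triviality asserted in Step 3 are not established.

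For the lemma as the paper defines it, you can bypass Step 2. There, being tangent to a foliation only requires a partition into maximal injectively immersed tangent surfaces; local triviality is stated as a consequence.

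\begin{itemize}
\item Run Step 3 with arbitrary local tangent disks instead of uniform plaques. By Step 1, two such disks meet in an open subset of each. Their open subsets therefore generate a Hausdorff topology finer than that of $M$.
\item In this topology the chain classes are connected, $C^1$ injectively immersed surfaces tangent to $\eta$.
\item Your maximality argument applies verbatim, since it uses only connectedness.
\item Second countability needs a different argument than counting plaques per box. For example, use the leafwise path metric: a connected, locally compact metric space is $\sigma$-compact.
\end{itemize}

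This is also where your route compares favorably with the paper's sketch. The paper declares $p\sim_\eta q$ when a $C^1$ path tangent to $\eta$ joins them. The example above shows that, when unique integrability is defined via surfaces, such paths can cross leaves. The tangent-path relation therefore tacitly needs a stronger hypothesis, whereas your plaque-chain relation uses only the one given. What neither argument actually proves is completeness, equivalently local triviality, of the leaves. That is precisely what your Step 2 was after, and it would need a genuinely new argument for extending tangent graphs past frontier points.
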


\begin{proof}
    We only sketch the main ideas. We define an equivalence relation on $M$ as follows: two points $p, q \in M$ are equivalent with respect to $\eta$, denoted by $p \sim_\eta q$, if there exists a $C^1$ path from $p$ to $q$ tangent to $\eta$. Then, one shows that this is a well-defined equivalence relation on $M$, and that each equivalence class is an injectively immersed surface in $M$, using the unique integrability condition. The partition~\eqref{eq:partition} is simply the partition of $M$ into equivalence classes for $\sim_\eta$. The maximality of each $L_i$ also follows from the unique integrability condition and the compactness of $M$.
\end{proof}

By the Cauchy--Lipschitz--Picard--Lindel\"{o}f theorem, any locally integrable and Lipschitz regular (e.g., $C^1$ regular) plane field is uniquely integrable, hence tangent to a foliation. Moreover, by the Clebsch--Deahna--Frobenius theorem, a $C^1$ plane field $\eta$ is uniquely integrable if and only if it satisfies the involutivity condition with respect to the Lie bracket: for any two (local) $C^1$ vector fields $X, Y \in \eta$ tangent to $\eta$, their Lie bracket $[X, Y]$ is also tangent to $\eta$. Dually, if $\eta = \ker \alpha$ for a $C^1$ (local) $1$-form $\alpha$, the involutivity condition is equivalent to 
\begin{align}
    \alpha \wedge d\alpha \equiv 0,
\end{align}
as a consequence of Cartan magic formula. However, a \emph{continuous} and locally integrable plane field might fail to be tangent to a foliation. Here is a $2$-dimensional example.

\begin{ex}
Let $f : \R \rightarrow \R$ be the continuous function defined by 
$$f(x) \coloneqq \mathrm{sign}(x) \vert x \vert^{1/2},$$
where $\mathrm{sign}(x) = 1$ if $x> 0$, $\mathrm{sign}(x)= -1$ if $x< 0$, and $\mathrm{sign}(0)=0$. Let $X$ be the continuous vector field on $\R^2_{x,y}$ defined by
$$X \coloneqq \partial_x + f(y)\,   \partial_y.$$
The flow lines of $X$ correspond to the solutions to the ODE 
$$y' = f(y),$$
which are of the form
$$y(x) = \begin{cases}
            0 & \mathrm{if \ } x \leq c, \\
            \sigma \frac{1}{4} (x-c)^2 & \mathrm{if \ } x \geq c,
         \end{cases}$$
for $\sigma \in \{-1, 0, 1\}$ and some $c \in \R$. Therefore, the line field generated by $X$ is locally integrable, as every point in $\R^2$ is contained in a flow line of $X$. However, any two flow lines of $X$ intersect along a set of the form $\{ x \leq c, \ y=0\}$ for some $c \in \R \cup \{+ \infty\}$. Therefore, this line field is not tangent to a foliation, and in particular, it is not uniquely integrable. Notice that $X$ is H\"older continuous but not Lipschitz.
\end{ex}

In general, it is hard to determine if a continuous plane field is locally integrable, as it amounts to solving nonlinear PDEs with continuous coefficients. However, this can be reduced to a $1$-dimensional problem (i.e., solving an ODE) in the presence of sufficiently regular tangent vector fields preserving $\eta$.

\begin{lem} \label{lem:tangentint}
    Let $\eta$ be a continuous plane field on a $3$-manifold $M$, and assume that there exists a smooth (or $C^1$) nowhere vanishing vector field $X$ tangent to $\eta$ and whose flow preserves $\eta$. Then $\eta$ is locally integrable.
\end{lem}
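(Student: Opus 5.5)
The plan is to reduce the two-dimensional integrability problem to a one-dimensional one: first find a curve tangent to $\eta$, then sweep it along the flow of $X$. Fix $p \in M$ and choose a small smooth surface germ $D$ through $p$ transverse to $X$, say a disk with $T_pD \neq \eta_p$. Since $X_p \in \eta_p$ and $D$ is transverse to $X$, the intersection $\eta \cap TD$ is a one-dimensional subspace at every point of $D$ near $p$. This gives a continuous line field $\ell$ on $D$, which is the characteristic line field of $D$ for $\eta$.

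By the Peano existence theorem (continuity of $\ell$ suffices, with no Lipschitz assumption), there is a $C^1$ curve $c : (-\varepsilon,\varepsilon) \to D$ with $c(0)=p$ that is tangent to $\ell$, hence tangent to $\eta$. Uniqueness of such a curve is not needed, which is exactly why local integrability and not foliation is the conclusion. Concretely, we may first pick a continuous nonvanishing vector field spanning $\ell$ locally, which is possible on a disk, and apply Peano to it.

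Next define $F(s,t) \coloneqq \phi^X_t(c(s))$ for $(s,t)$ near $(0,0)$, where $\phi^X_t$ denotes the flow of $X$. Since $X$ is $C^1$, the flow is $C^1$ in $(t,x)$, so $F$ is $C^1$. We have $\partial_t F = X(F)$, which is tangent to $\eta$. Moreover
$$\partial_s F = d\phi^X_t\bigl(c'(s)\bigr) \in d\phi^X_t\bigl(\eta_{c(s)}\bigr) = \eta_{F(s,t)},$$
because the flow preserves $\eta$. At $(0,0)$ the two vectors $c'(0)$ and $X_p$ are linearly independent, since $c'(0) \in TD$ is nonzero and $X_p$ is transverse to $D$. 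Therefore $F$ is an embedding near $(0,0)$, and its image is a surface through $p$ whose tangent plane is spanned by $\partial_sF$ and $\partial_tF$, both in $\eta$. Hence this tangent plane equals $\eta$.

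The main obstacle is regularity. The resulting surface is only $C^1$, whereas the definition of local integrability asks for an embedded smooth surface. I would address this either by reading the definition as permitting $C^1$ tangent surfaces, as the foliation discussion implicitly does, or by noting that any $C^1$ surface tangent to $\eta$ suffices for the intended use. A smaller point is that the flow-invariance hypothesis must be used in its pointwise form $d\phi^X_t(\eta_x) = \eta_{\phi^X_t(x)}$, which is what "the flow preserves $\eta$" means for a merely continuous $\eta$.
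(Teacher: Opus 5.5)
Your proof is correct and is essentially the paper's argument. The paper takes a flow box in which $X=\partial_x$, so that flow-invariance makes the characteristic line field $\partial_y + f(y,z)\,\partial_z$ on the slices $\{x=\mathrm{cst}\}$ independent of $x$; it then applies Cauchy--Peano on one slice and takes the product $(-\epsilon,\epsilon)_x\times\gamma$, which is exactly your sweep $F(s,t)=\phi^X_t(c(s))$ written in flow-box coordinates. Your regularity worry is moot: the paper's proof likewise produces only a $C^1$ embedded disk, and $C^1$ tangent surfaces are what the definition intends, since a merely continuous $\eta$ need not admit smooth tangent surfaces at all.
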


\begin{proof}
    Let $p \in M$. We construct an embedded disk in $M$ tangent to $\eta$ and passing through $p$ as follows. First, we choose a flow box for $X$ around $p$, i.e., smooth (or $C^1$) coordinates $(x, y, z) \in (-\delta, \delta)^3$ centered at $p$ in which $X$ becomes $\partial_x$. We can further assume that $\partial_z$ is transverse to $\eta$ in these coordinates. Since $\eta$ is invariant under $X$, it is of the form
    $$\eta = \mathrm{span}\{\partial_x, \partial_y + f(y,z) \partial_z \},$$
    for some continuous function $f$ which is independent of $x$. Note that the vector field $\partial_y + f(y,z) \partial_z$ spans the characteristic line field of each surface $\{x = cst\}$ for $\eta$. By the Cauchy--Peano theorem, the ODE
    $$z'(y) = f(y,z), \qquad z(0)=0,$$
    admits a (not necessarily unique!) solution defined in a neighborhood of $0$. In other words, there exists an $\epsilon > 0$ and a $C^1$ curve $\gamma : (-\epsilon, \epsilon) \rightarrow \R^2_{y,z}$ with $\gamma(0) = (0,0)$, and which is tangent to $\partial_y + f(y,z) \partial_z$. Then, $(-\epsilon, \epsilon)_x \times \gamma$ is a $C^1$ embedded disk passing through $(0,0,0)$ and tangent to $\eta$.
\end{proof}

\subsubsection{$C^0$-foliations}

We now define a \textbf{$C^0$-foliation} as a collection of injectively immersed ($C^1$) surfaces partitioning $M$, which are tangent to a (globally defined) continuous plane field, and which are maximal as before. We call these surfaces the \textbf{leaves} of the foliation. While a $C^0$-foliation has a well-defined continuous tangent plane field by definition, a given continuous plane field might be tangent to many \emph{different} foliations, due to the lack of unique integrability. See~\cite{BF03} for some striking examples of this phenomenon in dimension $2$.

One can also define foliations through the notion of \emph{foliated atlases}, without requiring the existence of a tangent plane field. The leaves of such topological foliations are not necessarily $C^1$. However, a theorem of Calegari implies that any topological foliation by surfaces on a $3$-manifold can be isotoped to a $C^0$-foliation whose leaves are moreover smoothly immersed (but the tangent plane field only varies continuously in the direction transverse to the leaves), see~\cite{C01}.

\subsubsection{Tautness} Let $\mathcal{F}$ be a $C^0$-foliation, with tangent plane field denoted by $T \mathcal{F}$. We say that $\mathcal{F}$ is \textbf{(everywhere) taut} if $T \mathcal{F}$ is taut as a $C^0$-plane field. We warn the reader that there are weaker definitions of tautness in the literature which are not all equivalent for $C^0$-foliations, but become equivalent after suitable isotopies, see~\cite{CKR19}.

An important consequence of tautness due to Novikov is that transverse loops to a taut foliation are noncontractible. In particular, the fundamental group of the ambient manifold is nontrivial. Moreover, the universal cover of the manifold is irreducible (except if the foliation is homeomorphic to the fibration by spheres on $S^1 \times S^2$). See~\cite{C07} for a more thorough discussion of taut foliations in dimension $3$.

    \subsection{Contact structures}

We now provide a very quick overview of some basic notions of contact geometry in dimension $3$. We refer the reader to~\cite{G09} for a more thorough introduction.

\subsubsection{Nonintegrability}

A smooth (or $C^1$) plane field $\xi$ on a $3$-manifold $M$ is a \textbf{contact structure} if it is ``maximally nonintegrable'', in the sense that for any two (germs of) linearly independent vector fields $X$ and $Y$ spanning $\xi$ near a point $p \in M$, we have $[X,Y](p) \notin \xi$. Dually, this means that $\xi$ is (locally) defined as $\xi = \ker \alpha$ for a $1$-form $\alpha$ satisfying that $\alpha \wedge d\alpha$ is nowhere vanishing near $p$. As a consequence, for every $p \in M$, there are no (germs of) surfaces passing through $p$ and tangent to $\xi$. In other words, $\xi$ is ``maximally twisting''. This can be rephrased in terms of tangent vector fields as follows.

\begin{lem}
    Let $\xi$ be a smooth (or $C^1$) plane field tangent to a smooth (or $C^1$) nonvanishing vector field $X$. Let $g$ be some auxiliary Riemannian metric, and let $\theta_t(p) \in [0,\pi)$ denote the unoriented angle between $\xi(p)$ and $\xi_t(p) \coloneqq (\varphi_X^t)_* \xi(p)$, which is well-defined for $\vert t \vert$ small enough. Then $\xi$ is contact if and only if for all $p \in M$ and all small $t \geq 0$,
    $$\partial_t \theta_t(p) > 0.$$
\end{lem}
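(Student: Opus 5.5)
The plan is to reduce the statement to a pointwise computation at $t=0$, where both the contact condition and the angular velocity are controlled by one quantity $(\mathcal{L}_X\alpha)(Y)$.

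\textbf{Local setup.} Fix $p \in M$. Near $p$, I write $\xi = \ker \alpha$ for a $C^1$ $1$-form $\alpha$, and I choose a $C^1$ vector field $Y$ tangent to $\xi$ with $\xi = \mathrm{span}\{X, Y\}$. Since $X \in \xi$, we have $\alpha(X) = 0$. The pushed-forward plane field $\xi_t = (\varphi_X^t)_*\xi$ is the kernel of $\alpha_t \coloneqq (\varphi_X^{-t})^*\alpha$. Differentiating at $t=0$ gives
\[ \dot\alpha_0 = -\mathcal{L}_X \alpha = -\iota_X d\alpha, \]
where the last equality uses Cartan's formula and $\alpha(X)=0$.

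\textbf{Signed angle.} The unoriented angle $\theta_t$ is not differentiable where it vanishes, so I replace it by a signed angle. I fix a local transverse vector field $Z$ and measure the rotation of $\xi_t$ about the common line $\mathbb{R}X$, which is contained in every $\xi_t$ because $X$ is $\varphi_X^t$-invariant. Concretely, $\xi_t$ is spanned by $X$ and $Y + s_t Z$ for a unique $C^1$ function $s_t$. Solving $\alpha_t(Y + s_tZ) = 0$ to first order gives
\[ \dot s_0 = \frac{(\mathcal{L}_X\alpha)(Y)}{\alpha(Z)}. \]
Up to a positive factor depending on the metric, $\theta_t$ is comparable to $|\arctan$-type function of $s_t|$. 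Hence its right derivative at $t=0$ equals a positive multiple of $|\dot s_0|$.

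\textbf{Relating to the contact condition.} Since $\alpha(X) = \alpha(Y) = 0$, evaluating on the frame gives
\[ (\alpha \wedge d\alpha)(X,Y,Z) = \alpha(Z)\, d\alpha(X,Y) = \alpha(Z)\,(\mathcal{L}_X\alpha)(Y), \]
using $d\alpha(X,Y) = (\iota_X d\alpha)(Y)$. So $\xi$ is contact at $p$ if and only if $(\mathcal{L}_X\alpha)(Y)(p) \neq 0$, which holds if and only if $\partial_t\theta_t(p)|_{t=0^+} > 0$.

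\textbf{Both directions.} The converse is now immediate: if contact fails at $p$, the derivative at $t=0$ vanishes. For the forward direction I still need positivity for all small $t>0$, not just $t=0$. I expect the main (mild) obstacle here to be uniformity. I would use the flow property: $\xi_{t+h}(p)$ is the pushforward of $\xi_h$ at the point $\varphi^{-t}(p)$. By continuity of the quantity $(\mathcal{L}_X\alpha)(Y)$, which is $C^0$ since $\alpha$ and $Y$ are $C^1$, the signed rotation speed $\dot s_t$ stays bounded away from $0$ with a constant sign for $t \in [0,t_0]$. Compactness of $M$, or of a neighbourhood of $p$, makes $t_0$ uniform. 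Finally, because $s_t$ starts at $0$ and moves strictly monotonically, $|s_t|$ and hence $\theta_t$ are strictly increasing. This gives $\partial_t\theta_t>0$ for small $t\ge 0$.
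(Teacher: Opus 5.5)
Your proposal is correct and is essentially the standard argument the paper defers to (Eliashberg--Thurston, Proposition 1.1.6). In both, the first-order rotation of $\xi_t(p)$ about $\mathbb{R}X(p)$ is measured by $d\alpha(X,Y)=(\mathcal{L}_X\alpha)(Y)$, and $(\alpha\wedge d\alpha)(X,Y,Z)=\alpha(Z)\,d\alpha(X,Y)$ identifies its nonvanishing with the contact condition, with $t=0$ read as a right derivative; your flow-property step for small $t>0$ also works, since $d\varphi^t_X$ maps the pencil of planes containing $X$ at $\varphi_X^{-t}(p)$ diffeomorphically onto the pencil containing $X(p)$ and so preserves a nonzero rotation speed.
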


See~\cite[Proposition 1.1.6]{ET} for a proof. This means that $\xi$ twists along any vector field tangent to it; compare with Lemma~\ref{lem:tangentint}.

When the ambient manifold $M$ is oriented and the contact structure $\xi$ itself is (co)oriented, one can choose a global $1$-form $\alpha$ satisfying $\xi = \ker \alpha$, and which is compatible with the coorientation of $\xi$. Such an $\alpha$ is called a \textbf{contact form} for $\xi$. Then $\alpha \wedge d\alpha$ is a volume form on $M$ and has a well-defined \emph{sign}. Any other contact form $\alpha'$ for $\xi$ is of the form $f \alpha$, for a positive function $f$, and $\alpha' \wedge d\alpha' = f^2 \alpha \wedge d\alpha$, so this sign is independent of the choice of a specific contact form for $\xi$. We say that $\xi$ is a \textbf{positive (resp.~negative)} contact structure if this sign is positive (resp.~negative). 

Let us make this discussion more concrete. On $\R^3_{x,y,z}$, the \textbf{standard (positive and negative) contact structures} are defined by
\begin{align}
    \xi^\mathrm{std}_+ &\coloneqq \{dz + x dy =0 \},\\
    \xi^\mathrm{std}_- &\coloneqq \{dz - x dy =0 \}.
\end{align}
One can alternatively define them in cylindrical coordinates as $\{ dz \pm r^2 d\theta =0\}$. It is an instructive exercise to find a diffeomorphism of $\R^3$ sending $\xi^\mathrm{std}_+$ to $\{ dz + r^2 d\theta =0\}$.

\subsubsection{Basic properties}

A fundamental property of contact structures is that they are \emph{locally trivial}. Indeed, \textbf{Darboux theorem} asserts that if $\xi$ is a contact structure on a $3$-manifold $M$, then for every $p \in M$, there exist coordinates $(x,y,z)$ centered at $p$ in which $\xi$ coincides with $\{ dz + xdy = 0\}$. If $M$ and $\xi$ are oriented, and if $\xi$ is positive, we can further assume that these coordinates preserve the orientation on $M$.

Another remarkable feature of contact structures is that they are \emph{stable} objects, in the sense of \textbf{Gray stability}: if $(\xi_t)_{t \in [0,1]}$ is a path of contact structures on $M$ and $M$ is closed, then there exists an isotopy $(\phi_t)_{t \in [0,1]}$ of $M$ such that $\phi_t^*\xi_t = \xi_0$ for every $t \in [0,1]$. In particular, if $\xi$ is a given contact structure on $M$ and if $\xi'$ is a $C^1$ plane field which is sufficiently $C^1$-close to $\xi$, then $\xi'$ is also a contact structure and it is isotopic to $\xi$. Notice that there is no equivalent statement for integrable plane fields!

Since the previous results hold for $C^1$ contact structures, every $C^1$-contact structure can be approximated by a smooth one which is moreover $C^1$-isotopic to it. Therefore, we will always assume that contact structures are smooth.

\subsubsection{Tightness}

In $\R^3$ with cylindrical coordinates, we define the standard \textbf{overtwisted} contact structure $\xi_\mathrm{OT}$ as
$$\xi_\mathrm{OT} \coloneqq \{\cos(r)dz + r \sin(r) d\theta = 0\}.$$
Note that $\xi_\mathrm{OT}$ is tangent to the disk $D_\mathrm{OT} \coloneqq \{r \leq \pi, \ z=0\}$ along $\partial D_\mathrm{OT}$ and at the origin, and is transverse to it elsewhere, where the characteristic foliation is made of rays from the origin to the boundary. An \textbf{overtwisted disk} in a contact $3$-manifold $(M, \xi)$ is a contact embedding of a neighborhood of $(D_\mathrm{OT}, \xi_\mathrm{OT})$ in $M$. If such an overtwisted disk exists, we call $\xi$ \textbf{overtwisted}. Otherwise, we call it \textbf{tight}.

It was shown by Bennequin that the standard positive and negative contact structures on $\R^3$ are tight. A fundamental result of Eliashberg~\cite{E89} implies that overtwisted contact structures are completely classified in terms of their homotopy class as plane fields. On the other hand, tight contact structures are more geometric and are much harder to construct and classify.

\section{From foliations to contact structures}

Foliations and contact structures are, by definition, quite different objects; foliations arise from integrable plane fields, whereas contact structures are maximally nonintegrable plane fields. Nevertheless, these two types of structures are more closely related than it may seem.

    \subsection{The Eliashberg--Thurston theorem}

\subsubsection{Statement}

The first general result relating foliations and contact structures was obtained by Eliashberg and Thurston in~\cite{ET}. They proved that most foliations on a closed oriented $3$-manifold can be $C^0$-approximated by contact structures. More precisely:

\begin{thm}[Eliashberg--Thurston~\cite{ET}]
    Let $\mathcal{F}$ be an orientable foliation on a closed, oriented $3$-manifold $M$, different from the product foliation by spheres on $S^1 \times S^2$. Assume that $T\mathcal{F}$ is $C^2$. Then $T\mathcal{F}$ can be $C^0$-approximated by positive and negative contact structures. Moreover, if $\mathcal{F}$ is taut, then its contact approximations are tight.
\end{thm}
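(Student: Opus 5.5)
The proof has three stages. First, reduce to a local problem: perturb $T\mathcal{F}$ into a positive confoliation that is contact on some open set. Then spread the contact condition along leaves, and finally deduce tightness from tautness.

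\textbf{Step 1: holonomy and the initial contact region.} Recall that a positive \emph{confoliation} is a $C^1$ plane field $\xi=\ker\alpha$ with $\alpha\wedge d\alpha\geq 0$. Its \emph{contact region} is where this inequality is strict. Except for the sphere fibration on $S^1\times S^2$, every foliation either has a leaf with nontrivial holonomy (after a preliminary $C^0$-small modification), or is a foliation without holonomy. A foliation without holonomy is $C^0$-close to a fibration over $S^1$ or to a linear foliation on $T^3$-type manifolds. In the no-holonomy case, I would directly perturb using a closed $1$-form close to a defining form. For instance, $\alpha=dz+\epsilon\beta$ on a mapping torus, with $\beta$ twisting along the fiber direction, gives $\alpha\wedge d\alpha=\epsilon\, dz\wedge d\beta+O(\epsilon^2)>0$ for a suitable choice of $\beta$. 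In the holonomy case, take a loop $\gamma$ in a leaf with nontrivial (contracting on one side) holonomy. In a neighborhood $S^1\times D^2$ of $\gamma$, written in coordinates where $\mathcal{F}=\ker(dz-a(x,z)dx)$, I would modify $a$ to add a small $y$-dependent twist $\epsilon\, y\,dx$-type term. The contracting holonomy is what lets the twisting close up around $S^1$. This yields a confoliation $C^0$-close to $T\mathcal{F}$ that is contact near $\gamma$. The $C^2$ hypothesis is used here, to make $d\alpha$ and the holonomy derivatives meaningful.

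\textbf{Step 2: propagation.} Now spread the contact condition. Since every leaf either accumulates on the contact region or can be joined to it by leafwise paths, and $M$ is compact, I cover $M$ by finitely many foliation charts $D^2\times[-1,1]$ ordered along leafwise paths emanating from the contact region. In each chart I apply the standard local lemma. If $\xi$ is a confoliation that is contact near one side $\{x=-1\}$ of a flow box (with $\partial_x$ tangent to leaves), then $\xi$ can be $C^0$-perturbed, supported in the box, to be contact in the whole box. Concretely, I write $\xi=\ker(dz-f\,dy)$. The contact condition becomes $\partial_x f>0$, and I replace $f$ by a function increasing in $x$ that agrees with the old $f$ near the boundary of the box. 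This is possible precisely because some twist is already present at the incoming side. Iterating over the finite chain of charts gives a contact structure, $C^0$-close to $T\mathcal{F}$ because each modification is $C^0$-small. Reversing the orientation gives the negative case. I expect this propagation to be the main obstacle. The issue is arranging the leafwise chains so that every point is reached while later modifications do not destroy earlier contact regions. Openness of the contact condition and careful shrinking of supports should handle this.

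\textbf{Step 3: tightness.} By Proposition~\ref{prop:tautequiv}, item 4, tautness gives a closed $2$-form $\omega$ with $\omega_{\vert T\mathcal{F}}>0$. This is an open condition, so $\omega_{\vert\xi}>0$ for all sufficiently $C^0$-close contact structures $\xi$. Then $(M\times[-1,1],\ \omega+d(t\alpha))$ is symplectic for small perturbation parameters and weakly fills $(M,\xi)\sqcup(-M,\xi')$, with $\xi'$ the other approximation. Weakly semi-fillable contact structures are tight by the Eliashberg--Gromov theorem, which concludes the proof.
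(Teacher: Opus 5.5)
Your Step 3 matches the paper's argument. Steps 1--2, however, have a genuine gap at the heart of the proof. You make the structure contact near a single loop $\gamma$ with nontrivial holonomy, and then assert in Step 2 that every leaf can be joined to that contact region by leafwise paths. This is false in general. The set of points reachable from an open set $U$ by paths tangent to $\mathcal{F}$ is the saturation of $U$. Its complement is a closed saturated set, so it contains a minimal set unless $U$ meets \emph{every} minimal set. For example, a region foliated by torus leaves without holonomy is never reached from a contact region located elsewhere. The paper, following Eliashberg--Thurston, first $C^0$-perturbs $\mathcal{F}$ so that \emph{each} minimal set (a closed leaf, an exceptional minimal set, or $M$ itself) contains a curve with attracting holonomy. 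This uses Sacksteder's theorem for exceptional minimal sets, which is where $C^2$ is really needed (not merely to make sense of $d\alpha$), together with results of Ghys and ad hoc modifications near closed leaves. Only after perturbing near all these curves is the confoliation transitive, so that propagation applies.

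Your local formulas also fail. If you only modify the $dx$-coefficient, $\alpha = dz + h\,dx$, then $\alpha\wedge d\alpha = -\partial_y h\,dx\wedge dy\wedge dz$. The holonomy term $\partial_z h$ never appears, and $\int \partial_y h\,dy = 0$ along each $y$-segment, since $h$ is unchanged outside a compact $y$-range. So $\alpha \wedge d\alpha$ cannot be $\geq 0$ and somewhere positive. The correct model adds $\epsilon\rho\,dy$ to $dz + f(z)\,dx$ and gives $\epsilon(\rho f' - 2zf\tau')\,dx\wedge dy\wedge dz$. Attracting holonomy ($f'>0$, hence $zf\geq 0$) is exactly what makes the cutoff error $-2zf\tau'$ nonnegative; it has nothing to do with ``closing up around $S^1$''.

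In the no-holonomy case, the leading term $\epsilon\,\lambda\wedge d\beta$ for a closed defining form $\lambda$ can never be everywhere positive, because $\int_M \lambda\wedge d\beta = -\int_M d(\lambda\wedge\beta) = 0$. For a fibration, $d\beta$ would have to be an area form on a closed fiber. More strongly, for linear foliations on $\mathbb{T}^3$ without closed leaves, a family $\ker(\lambda+\epsilon\beta)$ that is contact for all small $\epsilon>0$ would be a contact deformation, which Vogel's result quoted in the paper rules out. You must first perturb the foliation itself. One option is a torus fibration, approximated by $dz+\epsilon(\cos z\,dx-\sin z\,dy)$, where $dz\wedge d\beta = 0$ and positivity comes from the $\epsilon^2$ term. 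The other is to create holonomy.

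Finally, in your propagation lemma, $f$ cannot be strictly monotone in $x$ on the whole box and also agree with the old $f$ near the exit face, unless the structure was already contact there. What works is to redistribute the total twist $\int \partial_x f\,dx$ so the box becomes contact away from its exit face, and then chain boxes.
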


Note that by the Reeb stability theorem, the only foliation on a closed orientable $3$-manifold with a sphere leaf is the product foliation by spheres on $S^1 \times S^2$. Foliations without sphere leaves are called \textbf{aspherical}.

\subsubsection{Glimpse into the proof}

Let us give a very brief overview of the proof of the Eliashberg--Thurston theorem. The idea is to successively approximate $T \mathcal{F}$ by plane fields which interpolate between foliations and contact structures, called \textbf{confoliations}: a $C^1$ plane field $\xi$ is a (cooriented, positive) confoliation if it is of the form $\xi = \ker \alpha$ for a $1$-form $\alpha$ satisfying $\alpha \wedge d\alpha \geq 0$. Then, $\xi$ is contact on the open set where $\alpha \wedge d\alpha > 0$. If this set is sufficiently large, then one can ``propagate'' the contactness along curves tangent to $\xi$ and make it contact everywhere, by a $C^0$-small perturbation. The key is then to perturb $T \mathcal{F}$ to create some contactness on a neighborhood of its \emph{minimal sets}. A minimal set of $\mathcal{F}$ is a (nonempty!) closed subset $\Lambda \subset M$ which is saturated by leaves of $\mathcal{F}$, and which is minimal for the inclusion among those sets. Note that every point in $M$ can be joined to a neighborhood of some minimal set by a curve tangent to $\mathcal{F}$. Minimal sets are of three types: closed leaves, $M$ itself if it contains a dense leaf of $\mathcal{F}$, and \emph{exceptional} minimal sets, which have a special structure. Assuming that the foliation is not a foliation without holonomy,\footnote{The case of foliations without holonomy is treated differently; these foliations are essentially classified and can be perturbed ``by hand''.} and using results of Sacksteder and Ghys (and ad hoc modifications near closed leaves), one can approximate $\mathcal{F}$ by a foliation whose minimal sets all admit a curve with \emph{attracting holonomy}. Such a curve $\gamma : S^1 \rightarrow M$ is a $C^1$ curve contained in a leaf $L$ of $\mathcal{F}$, so that the holonomy along $\gamma$ is contracting: the leaves parallel to $L$ get closer to $L$ while going along $\gamma$. Then one can approximate the latter foliation by confoliations which are contact near curves with attracting holonomy. Here is a model computation:

\begin{ex}
    Let $\mathcal{F}$ be the foliation on $S^1_x \times \R^2_{y,z}$ defined by the $1$-form
    $$\alpha_0 \coloneqq dz + f(z) dx,$$
    where $f : \R \rightarrow \R$ is a $C^1$ map satisfying $f(0)=0$ and $f' > 0$. Note that we have $T \mathcal{F} = \mathrm{span}\big( \partial_y, \partial_x - f(z) \partial_z\big)$, so the curve $\gamma \coloneqq S^1 \times \{(0,0)\}$ is a curve with attracting holonomy along the leaf $L_0 = \{z=0\}$ of $\mathcal{F}$. We can write an explicit perturbation of $T \mathcal{F}$ into a positive confoliation, following~\cite[Proposition 2.6.1]{ET}. First, let $\tau : \R_{\geq 0} \rightarrow  \R_{\geq 0}$ be a smooth cutoff function supported near $0$ which satisfies $\tau' \leq 0$, and $\tau(t) = 1$ for $t$ small enough. We then define $\rho(y,z) \coloneqq \tau(y^2 + z^2)$, and for $\epsilon > 0$, we set
    $$\alpha_+ \coloneqq \alpha_0 + \epsilon \rho \, dy.$$
    We compute:
    \begin{align*}
        \alpha_+ \wedge d\alpha_+ &= \epsilon \underset{u(y,z) \geq 0}{\underbrace{\big( \rho f' - 2 zf \tau'(y^2 + z^2) \big)}} \, dx \wedge dy \wedge dz,
    \end{align*}
    so that $\alpha_+$ is a positive confoliation. Moreover, for $\vert y\vert$ and $\vert z\vert$ small enough we have $u(y,z) = f'(z) > 0$, so $\alpha_+$ is contact in a neighborhood of $\gamma$. Outside a larger neighborhood of $\gamma$, $\alpha_+ = \alpha_0$, so the confoliation coincides with $T \mathcal{F}$ away from $\gamma$.
\end{ex}

After these operations, we obtain a confoliation $\xi$ approximating $T \mathcal{F}$ which is moreover \emph{transitive}: any point in $M$ can be joined to the contact region by a curve tangent to the foliation. The contactness can then be propagated along neighborhoods of curves tangent to $\xi$; see~\cite[Proposition 2.8.1]{ET} for details.

\smallskip

If the foliation $\mathcal{F}$ is taut, then there exists a closed $2$-form $\omega$ that evaluates positively along its leaves. Eliashberg and Thurston construct a symplectic structure on $[-1,1] \times M$, which is a \emph{weak symplectic filling} of $(M, \xi_-) \sqcup (M, \xi_+)$, where $\xi_\pm$ are negative and positive contact approximations of $\mathcal{F}$. Concretely, one defines a $2$-form $\Omega$ on $[-1,1]_t \times M$ by
$\Omega \coloneqq \omega + d(t \alpha)$,
where $\alpha$ is a $1$-form defining $\mathcal{F}$. By definition, $\Omega$ is closed, and one easily checks that it is nondegenerate, i.e., that $\Omega \wedge \Omega > 0$. Moreover, if $\xi_\pm$ are positive and negative contact approximations of $\mathcal{F}$ which are sufficiently close to $T \mathcal{F}$, then $\Omega$ restricts positively along $\xi_\pm$ on $\{\pm 1 \} \times M$. In this context, a theorem of Eliashberg and Gromov implies that the contact approximations $\xi_-$ and $\xi_+$ are tight.

    \subsection{Generalizations and variations}

The Eliashberg--Thurston theorem provides an efficient way to construct (tight) contact structures by approximating (taut) foliations. It has been extended and generalized in various directions; we now summarize some of them.

\subsubsection{$C^0$-foliations}

The Eliashberg--Thurston theorem was generalized to $C^0$-foliations independently by Kazez--Roberts~\cite{KR17} and Bowden~\cite{B16a}. Together with the smoothing results in~\cite{C01} and~\cite{KR19}, this implies that any closed oriented $3$-manifold admitting a coorientable taut \emph{topological} foliation admits positive and negative tight contact structures. By a deep result of Gabai, any irreducible closed $3$-manifold with positive first Betti number admits such a taut foliation, hence tight contact structures.

\subsubsection{Reebless foliations}

A foliation has a \emph{Reeb component} if it admits a torus leaf bounding a solid torus such that all the leaves inside that torus are planes. In particular, this solid torus (or its complement) is a dead-end component. Colin~\cite{C02} showed that \emph{any} $C^2$ aspherical \emph{Reebless} foliation $\mathcal{F}$ admits tight contact approximations. This has been strengthened by Bowden who showed that \emph{any} $C^0$ small contact approximation of $\mathcal{F}$ is tight~\cite{B16b}. Moreover, Colin's result holds for Reebless $C^0$-foliations~\cite{B16a} as well, with the caveat that while they admit tight contact approximations, it is not known if \emph{every} sufficiently small contact approximation is tight.

\subsubsection{Foliations without invariant transverse measures}

More recently, Zung showed in~\cite{Z21} that a $C^2$ foliation \emph{without holonomy-invariant transverse measures} $\mathcal{F}$ can be approximated by contact structures whose Reeb vector fields are transverse to $\mathcal{F}$. Since $\mathcal{F}$ is taut, the closed orbits of these Reeb vector fields are noncontractible, so these contact approximations are \emph{hypertight}. 

\subsubsection{Uniqueness of contact approximations}

In~\cite{V16}, Vogel proved that for most $C^2$ foliations without torus leaves, all the (positive) contact structures in a sufficiently small neighborhood of $T\mathcal{F}$ are contact isotopic. Therefore, the contact approximations of Eliashberg--Thurston are unique for these foliations. However, some foliations on $\mathbb{T}^3$ have infinitely many pairwise non-isotopic contact approximations, distinguished by their \emph{Giroux torsion}.

\subsubsection{Topological invariance}

In~\cite{BM}, Bowden and the author proved a strengthening of Vogel's uniqueness result, by showing that the contact approximations to a suitable foliation behave well under topological conjugations of foliations. More precisely, if $\mathcal{F}_0$ and $\mathcal{F}_1$ are two foliations for which Vogel's uniqueness applies, and if $h : M \rightarrow M$ is a \emph{homeomorphism} sending the leaves of $\mathcal{F}_0$ to the leaves of $\mathcal{F}_1$, then the positive (resp.~negative) contact approximations of $\mathcal{F}_0$ and $\mathcal{F}_1$ are contactomorphic.

\subsubsection{Approximation versus deformation}

A \emph{contact deformation} of a $C^0$-foliation $\mathcal{F}$ is a $C^0$-continuous $1$-parameter family of plane fields $(\xi_t)$, $t \in [0,1]$, where $\xi_0 = T \mathcal{F}$, and for $t > 0$, $\xi_t$ is a contact structure. While $C^2$ foliations `with enough holonomy' can be deformed into contact structures (see~\cite[Proposition 2.9.2]{ET}), linear foliations without closed leaves on $\mathbb{T}^3$ do \emph{not} admit contact deformations (see~\cite[Corollary 9.11]{V16}). In contrast, Etnyre showed that any (cooriented) contact structure on a $3$-manifold is a smooth deformation of a foliation, see~\cite{E07}. However, the foliations constructed by Etnyre \emph{always} have Reeb components.

    \subsection{Contact pairs}

Remarkably, the Eliashberg--Thurston theorem and its variations naturally produce \emph{two} contact approximations: a positive and a negative one. We shall think of those, and the way they interact, as capturing some geometric information about the original foliation. We will need both of them to be able to `reconstruct' the foliation, or rather produce a new one that is closely related to it. This motivates the following definition:

\begin{defn} \label{def:contpair}
A \textbf{contact pair} on an oriented $3$-manifold $M$ is a pair $(\xi_-, \xi_+)$ where $\xi_-$ is a negative contact structure and $\xi_+$ is a positive contact structure. Both are cooriented. A contact pair is \textbf{positive} if there exists a vector field $Z$ positively transverse to $\xi_-$ and $\xi_+$.
\end{defn}

Alternatively, a contact pair $(\xi_-, \xi_+)$ is positive if and only if at every point $p \in M$ with $\xi_-(p) = \xi_+(p)$, these two planes have the same orientation (and opposite coorientations). Positive contact pairs were introduced by Colin and Firmo in~\cite{CF11}. See also~\cite{CH20} for a summary (in English!) of Colin--Firmo's paper. One can also adapt the various definitions of tautness to positive contact pairs. For instance, a positive contact pair $(\xi_-, \xi_+)$ is \textbf{taut}\footnote{Those pairs were called \emph{strongly tight} in~\cite{CH20} and \emph{`fortement tendues'} in~\cite{CF11}. We found this terminology quite cumbersome and the name `taut' more appropriate.} if for every point $p \in M$, there exists a closed loop positively transverse to both $\xi_\pm$ and passing through $p$. In~\cite{Mas24}, we discussed some equivalent formulations:

\begin{prop} \label{prop:strongtight}
Let $(\xi_-, \xi_+)$ be a positive contact pair on $M$. The following are equivalent:
\begin{enumerate}
    \item $(\xi_-, \xi_+)$ is taut.
    \item For all $p, q \in M$, there exists a smooth path from $p$ to $q$ that is positively transverse to both $\xi_\pm$.
    \item There exists a smooth closed $2$-form $\omega$ such that $\omega_{\vert \xi_\pm} > 0$.
    \item There exists a smooth volume preserving vector field positively transverse to $\xi_\pm$.
\end{enumerate}
In particular, $\xi_\pm$ are weakly semi-fillable, hence tight.
\end{prop}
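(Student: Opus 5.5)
The plan is to reduce everything to Proposition~\ref{prop:tautequiv}, applied to a single auxiliary continuous plane field, and to use convex-cone arguments to pass between the pair and that plane field.

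\textbf{Easy implications.} The implications (2)$\Rightarrow$(1) and (4)$\Rightarrow$(1) are immediate. A path from $p$ to $p$ is a loop. For (4), choose a closed $2$-form as in (3); for instance, if $Z$ preserves $\mathrm{dvol}$, then $\omega=\iota_Z\mathrm{dvol}$ is closed and positive on any plane positively transverse to $Z$, which gives (4)$\Rightarrow$(3). The implication (3)$\Rightarrow$(4) is also direct. Given $\omega$ closed with $\omega_{\vert\xi_\pm}>0$, the form is in particular nowhere zero. Pick any volume form $\mathrm{dvol}$ and define $Z$ by $\iota_Z\mathrm{dvol}=\omega$. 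Then $\mathcal{L}_Z\mathrm{dvol}=d\omega=0$. For a positively oriented basis $(u,v)$ of $\xi_\pm$ we have $\mathrm{dvol}(Z,u,v)=\omega(u,v)>0$, so $Z$ is positively transverse to both planes. One has to be careful here: $\xi_-$ is a negative contact structure, but the orientation of the plane $\xi_-$ is induced from its coorientation and the orientation of $M$. With that convention the sign check is identical for both planes.

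\textbf{Reduction to a single plane field.} The key step is (1)$\Rightarrow$(2) and (1)$\Rightarrow$(3). For this I will build a continuous cooriented plane field $\eta$ such that any vector positively transverse to $\eta$ is positively transverse to both $\xi_\pm$. Here is the construction. Fix a vector field $Z_0$ positively transverse to both, which exists by positivity of the pair, and an auxiliary metric. At each point, the set $C_p$ of vectors positively transverse to both $\xi_\pm(p)$ is an open convex cone containing $Z_0$: it is the intersection of two open half-spaces. Choose $\eta$ to be a plane field \emph{nearly tangent} to $\xi_\pm$ such that the open half-space $\{\beta>0\}$ it cuts out lies inside $C_p\cup\{0\}$-closure. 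Concretely, take $\eta=\ker\beta$ with $\beta=\alpha_-+\alpha_+$, where $\alpha_\pm$ are defining forms normalized by $\alpha_\pm(Z_0)=1$.

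There is a catch: $\beta>0$ does not imply both $\alpha_\pm>0$. So instead I will work with the cone directly. Tautness of the pair gives, through each $p$, a loop in the cone field $C$. The standard proof of Proposition~\ref{prop:tautequiv} only uses the fact that the set of allowed tangent directions is an open convex cone field varying continuously. I will therefore rerun that argument with $C$ in place of a half-space.

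\textbf{Connectivity: (1)$\Rightarrow$(2).} Let $U_p$ be the set of points reachable from $p$ by $C$-paths. It is open, since $C$ is open and one can flow along $Z_0$ and perturb. It is also closed: if $q\in\overline{U_p}$, then a loop through $q$ transverse to $C$ passes through a point of $U_p$ near $q$. More precisely, the backwards direction of a small flow box of $Z_0$ around $q$ meets $U_p$, and one concatenates with a short $C$-arc to $q$. Connectedness of $M$ gives (2).

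\textbf{Main obstacle: (1)$\Rightarrow$(3).} This is a Sullivan/Hahn--Banach type duality, exactly as in Proposition~\ref{prop:tautequiv}. Consider the closed convex cone of foliation currents: limits of positive combinations of $2$-vectors $u\wedge v$ with $(u,v)$ a positive basis of $\xi_-(p)$ or $\xi_+(p)$, normalized to compact base. If no closed $\omega$ is positive on this cone, Hahn--Banach produces a nonzero closed structure current $T$ in it with $T=\partial S$, or a limit of boundaries. Pairing $T$ with the $1$-form dual to a closed transverse loop through its support gives a contradiction, using that $C$-paths intersect both planes positively. I expect making the compactness of the base and the boundary argument precise to be the hardest part. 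Finally, the statement that $\xi_\pm$ are weakly semi-fillable follows from (3) by the Eliashberg--Thurston construction $\Omega=\omega+d(t\alpha)$ on a collar $[-1,1]\times M$, interpolating between $\alpha_-$ and $\alpha_+$. Tightness then follows from Eliashberg--Gromov.
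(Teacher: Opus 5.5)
\textbf{The gap is the return direction.} Your cycle of implications never closes. You declare (4)$\Rightarrow$(1) ``immediate'', but the argument written under that heading proves (4)$\Rightarrow$(3). Nowhere do you go from (3) or (4) back to (1) or (2). What you actually establish is (1)$\Leftrightarrow$(2), (1)$\Rightarrow$(3) and (3)$\Leftrightarrow$(4).

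This missing direction is not formal: a volume-preserving $Z$ with values in the cone field $C$ need not have any closed orbit. The missing idea is recurrence. Let $Z$ preserve $\mathrm{dvol}$ and take values in $C$, and fix $p$ and $\epsilon>0$.
\begin{itemize}
\item Since $C$ is open, perturbing flow segments at one end gives open neighbourhoods $B^+$ of $\varphi^{\epsilon}_Z(p)$ and $B^-$ of $\varphi^{-\epsilon}_Z(p)$. Every point of $B^+$ is reachable from $p$ by a $C$-path, and $p$ is reachable by a $C$-path from every point of $B^-$.
\item The open set $B\coloneqq B^+\cap\varphi^{2\epsilon}_Z(B^-)$ contains $\varphi^{\epsilon}_Z(p)$. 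By Poincar\'e recurrence there are $x\in B$ and $T>2\epsilon$ with $\varphi^{T}_Z(x)\in B$, hence $\varphi^{T-2\epsilon}_Z(x)\in B^-$.
\item Concatenate $p\to x\to\varphi^{T-2\epsilon}_Z(x)\to p$ and round the corners inside the convex cones $C_q$. This gives a closed $C$-loop through $p$, i.e., (4)$\Rightarrow$(1).
\end{itemize}

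\textbf{Two smaller issues.}

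First, in (1)$\Rightarrow$(2), your ``more precisely'' step is circular. A point of the backward region below $q$ that lies in $U_p$ would already give $q\in U_p$. When $q\in\overline{U_p}\setminus U_p$, that region misses $U_p$ entirely, because $U_p$ is closed under forward $C$-paths. Use the loop through $q$ instead: perturbing it near its initial point shows that $q$ is reachable from every point of a full neighbourhood of $q$, and that neighbourhood meets $U_p$.

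Second, for (1)$\Rightarrow$(3), Hahn--Banach is a detour. The object in your contradiction step should be the closed $2$-form Poincar\'e dual to the loop, not a $1$-form. Concretely, take the pullback of a bump area form under the projection of a thin tube $D^2\times S^1\to D^2$. This form is already nonnegative on $\xi_\pm$ everywhere and positive on $\xi_\pm$ near the loop. Summing finitely many such forms over a cover of $M$ gives the desired $\omega$ directly, with no currents needed.
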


The Eliashberg--Thurston theorem naturally produces positive contact pairs approximating suitable foliations, and the pairs approximating taut foliations are automatically taut.

\section{From contact pairs to foliations}

In this section, we describe the main result of~\cite{Mas24} on the construction of foliations from contact pairs. As before, $M$ denotes a closed, oriented, connected $3$-manifold.

    \subsection{Main result}

\subsubsection{Statement}

Let $(\xi_-, \xi_+)$ be a positive contact pair on $M$, and let $Z$ be a smooth vector field positively transverse to both $\xi_\pm$.

\begin{thm} \label{thm:mainthm}
    Assume that at least one of $\xi_\pm$ is tight. Then there exists a $C^0$-foliation $\mathcal{F}$ transverse to $Z$. If moreover $(\xi_-, \xi_+)$ is taut, then $\mathcal{F}$ is taut.
\end{thm}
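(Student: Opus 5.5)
The plan is to build the foliation as the boundary of a ``region between'' the two contact structures, using the flow of $Z$. Write $\xi_\pm = \ker \alpha_\pm$ with $\alpha_\pm(Z) = 1$. The positivity of the pair means that along each flow line of $Z$, the planes $\xi_-$ and $\xi_+$ give two transverse ``slopes'' which coincide, with the same orientation, exactly where $\xi_- = \xi_+$. Consider the family of plane fields $\eta_s = \ker\big((1-s)\alpha_- + s\alpha_+\big)$, $s\in[0,1]$. All of them are transverse to $Z$.

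I would look for a \emph{$Z$-invariant} cone field of plane fields bounded by $\xi_-$ and $\xi_+$. Concretely, I would flow both contact structures by $\varphi_Z^t$. By the twisting characterization of contact structures (the analogue of the angle-monotonicity lemma, applied to the line field $\xi_+\cap\xi_-$ and to $Z$), positive contact structures rotate in one direction under the pushforward by a transverse flow, and negative ones in the opposite direction. Consequently the sets
$$C_T(p) \coloneqq \bigcap_{0\le t\le T} \big\{\text{planes between } (\varphi^t_Z)_*\xi_-(p) \text{ and } (\varphi^{-t}_Z)_*\xi_+(p)\big\}$$
(or the appropriate forward/backward choice) are nested as $T\to\infty$. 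Their limits are two $Z$-invariant continuous plane fields $\eta_-\le\eta_+$, still transverse to $Z$, obtained as limits of forward and backward iterates. These are the analogues of the weak stable and unstable bundles in the Anosov case of Mitsumatsu and Eliashberg--Thurston. Invariance under the flow of the $C^1$ tangent-ish direction, combined with the approach of Lemma~\ref{lem:tangentint} (reducing to an ODE in flow boxes), suggests that these limiting plane fields are locally integrable. I would then take $\mathcal{F}$ to be a foliation tangent to one of them, constructed by choosing leaves maximally, for instance the ``highest'' solutions via Peano, so as to get a genuine partition.

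The main obstacle is ensuring that this limiting procedure does not degenerate and produces an honest integrable field. Two things can go wrong. First, the nested cones might collapse onto a plane containing $Z$, i.e., lose transversality. Second, the limit might fail to be integrable because $Z$ is not tangent to it. This is exactly where tightness must enter. I would argue by contradiction: a degeneration yields a surface, built from flow lines of $Z$ and Legendrian curves, along which $\xi_+$ (or $\xi_-$) twists a full half-turn. From that I would construct an overtwisted disk, or a Legendrian unknot with nonnegative Thurston--Bennequin invariant. An alternative route, which I might try first, is to run a Colin--Firmo-type argument. There I would take the characteristic foliations of the $Z$-invariant surfaces, use a Giroux-flexibility / convex-surface analysis, and use the tight side to bound the amount of rotation. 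That would give a ``branched'' invariant surface that can then be split open into a foliation, in the style of branched surfaces carrying laminations filled in by product regions.

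Tautness is then the easy part. If the pair is taut, Proposition~\ref{prop:strongtight} provides a closed $2$-form $\omega$ with $\omega|_{\xi_\pm}>0$. We may take $\omega = \iota_Z\mathrm{dvol}$ for a volume-preserving $Z$, after choosing $Z$ as in item~4. Since $T\mathcal{F}$ is transverse to that $Z$, we get $\omega|_{T\mathcal{F}}>0$, hence $\mathcal{F}$ is taut by Proposition~\ref{prop:tautequiv}. For this to work, the construction must be carried out with that particular $Z$; equivalently, every curve transverse to both $\xi_\pm$ is transverse to $\mathcal{F}$, since $T\mathcal{F}$ lies between them.
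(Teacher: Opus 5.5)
\textbf{There is a genuine gap at the first step.} You flow $\xi_\pm$ along the \emph{transverse} field $Z$, but the twisting criterion for contact structures only concerns vector fields \emph{tangent} to the contact structure. A transverse flow need not rotate a contact structure at all: a Reeb flow of $\alpha_+$ satisfies $(\varphi^t)_*\xi_+=\xi_+$ for all $t$. So your cones $C_T(p)$ have no reason to be nested or to shrink, and nothing forces a limit.

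Even if you did obtain a $Z$-invariant continuous plane field transverse to $Z$, invariance would give no integrability. Contact structures are themselves invariant under their Reeb flows. Lemma~\ref{lem:tangentint} needs the invariant vector field to be \emph{tangent} to the plane field, which never happens in your setup. In the Anosov case of Mitsumatsu and Eliashberg--Thurston, the flow lies in $\xi_-\cap\xi_+$, not transverse to it.

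The missing idea is to flow along the vector field $X$ spanning $\xi_-\cap\xi_+$, which vanishes exactly on $\Delta$. Since $X$ is tangent to both, $\xi_+$ and $\xi_-$ twist in opposite directions along it. Both $(\varphi^t_X)_*\xi_\pm$ converge to a single continuous plane field $\eta$ that contains $X$, is invariant under its flow, and is sandwiched between $\xi_-$ and $\xi_+$. The paper writes $\eta=\ker(e^\sigma\alpha_+-e^{-\sigma}\alpha_-)$ and takes $\sigma$ to be the unique bounded solution of $X\cdot\sigma+\sinh(2\sigma)=g$ along orbits. This works for \emph{every} positive pair, without any tightness hypothesis, so tightness is not what prevents degeneration. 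The sandwiching makes $\eta$, and any $C^0$-close plane field, positively transverse to $Z$. Local integrability then follows from Lemma~\ref{lem:tangentint} away from $\Delta$, and from an analysis of the generic singularities of $X$ along $\Delta$.

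\textbf{Your integration step also fails as stated.} Choosing ``highest'' Peano solutions in flow boxes does not give a partition. Maximal solutions through different points can merge, as in the paper's $y'=\mathrm{sign}(y)\vert y\vert^{1/2}$ example, and choices in overlapping charts need not agree. In fact $\eta$ can fail to be tangent to \emph{any} foliation, so some $C^0$ perturbation is unavoidable. The singularities and closed orbits of $X$ make this harder still.

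The paper instead adapts Burago--Ivanov. It builds a \emph{branching} foliation tangent to $\eta$ by inductively extending tiles, together with a coherent preorder recording how they overlap (weak prefoliations). It then separates the leaves to get a genuine $C^0$-foliation that is $C^0$-close to $\eta$, hence transverse to $Z$. This is where tightness is used: it rules out \emph{disks of tangency}, i.e., immersed disks tangent to $\eta$ with boundary tangent to $X$, which would block the extension.

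Your tautness argument is fine once such a foliation exists. A closed $2$-form positive on $\xi_\pm$ is positive on planes sandwiched between them, hence on $T\mathcal{F}$ when it is close enough to $\eta$. But it presupposes a foliation that your construction does not produce.
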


Recall that the contact structures in a taut pair are automatically tight. Therefore, we obtain:

\begin{cor}
    Assume that $M \neq S^1 \times S^2$. Then $M$ admits a taut (topological) foliation if and only if it admits a taut contact pair.
\end{cor}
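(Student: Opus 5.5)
The corollary has two directions, and I will handle them separately: one uses the Eliashberg--Thurston theory, the other uses Theorem~\ref{thm:mainthm}.

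\emph{Taut foliation $\Rightarrow$ taut contact pair.} Let $M$ carry a taut topological foliation $\mathcal{F}$. Tautness forces $\mathcal{F}$ to be coorientable, possibly after passing to the convention used throughout, where taut means cooriented and $T\mathcal{F}$ is taut.

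The steps are as follows.
\begin{enumerate}
    \item Apply Calegari's smoothing result~\cite{C01} to isotope $\mathcal{F}$ to a $C^0$-foliation. Tautness is preserved, since transverse loops survive an isotopy.
    \item Because $M \neq S^1\times S^2$, Reeb stability shows that $\mathcal{F}$ is aspherical.
    \item Apply the $C^0$ version of Eliashberg--Thurston due to Kazez--Roberts and Bowden. This gives a positive contact structure $\xi_+$ and a negative contact structure $\xi_-$, each $C^0$-close to $T\mathcal{F}$.
\end{enumerate}
A vector field $Z$ transverse to $T\mathcal{F}$ stays positively transverse to both $\xi_\pm$ once the approximations are close enough. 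Hence $(\xi_-,\xi_+)$ is a positive contact pair.

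To see that the pair is taut, take the closed $2$-form $\omega$ with $\omega_{\vert T\mathcal{F}}>0$ from Proposition~\ref{prop:tautequiv}. The condition $\omega_{\vert \eta}>0$ is open in the $C^0$ topology on plane fields, so $\omega_{\vert\xi_\pm}>0$ for close approximations. Item 3 of Proposition~\ref{prop:strongtight} then gives tautness of the pair.

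\emph{Taut contact pair $\Rightarrow$ taut foliation.} By Proposition~\ref{prop:strongtight}, both contact structures in a taut pair $(\xi_-,\xi_+)$ are tight. Theorem~\ref{thm:mainthm} therefore applies and produces a taut $C^0$-foliation, which is in particular a taut topological foliation. This direction does not need the hypothesis $M\neq S^1\times S^2$.

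\emph{Main obstacle.} The delicate point is the first direction: passing from a topological foliation to one to which a contact approximation theorem applies, while keeping tautness under the approximation. The regularity issue is absorbed by citing~\cite{C01}, \cite{KR19} and the $C^0$ Eliashberg--Thurston results. The subtlety is to check that the contact approximations can be made $C^0$-close enough for the open conditions to hold, namely transversality to $Z$ and positivity of $\omega$ on $\xi_\pm$. This is exactly what the $C^0$-approximation statement provides. One also has to exclude $S^1\times S^2$, where the sphere foliation has no contact approximation; this is why that manifold is excluded in the statement.
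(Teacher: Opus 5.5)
Your argument is correct and matches the paper's. The forward direction is the ($C^0$) Eliashberg--Thurston approximation: the closed $2$-form positive on $T\mathcal{F}$ stays positive on $C^0$-close $\xi_\pm$, which makes the pair taut. The converse is Theorem~\ref{thm:mainthm}, applied once Proposition~\ref{prop:strongtight} shows that the contact structures in a taut pair are tight.

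The one step you treat quickly is \emph{tautness is preserved} under Calegari's smoothing. For $C^0$-foliations, the weaker notion of tautness (every leaf meets a closed transversal) need not make $T\mathcal{F}$ taut as a plane field, which Proposition~\ref{prop:tautequiv} requires. The notions agree after a suitable isotopy~\cite{CKR19}, so nothing breaks.
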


Since contact structures are inherently more flexible than foliations, this reformulation of the existence of taut foliations in terms of contact geometry will allow us to perform constructions that are a priori not available for foliations, see below.

\subsubsection{Brief overview of the proof}

The proof of Theorem~\ref{thm:mainthm} roughly goes as follows:
\begin{itemize}
    \item First, we construct a locally integrable plane field $\eta$ from the pair $(\xi_-, \xi_+)$ using the flow of a vector field $X$ spanning the intersection $\xi_- \cap \xi_+$. This vector field vanishes exactly at the points where $\xi_-$ and $\xi_+$ coincide. For generic contact pairs, the locus $\Delta$ of such points is nice enough, and the singularities of $X$ along $\Delta$ are not too wild. We note that the plane field $\eta$ is canonically associated with the pair, and depends continuously on it.
    
    \item The second and hardest step is to integrate the (continuous!) plane field $\eta$. Since it fails to be uniquely integrable in general, the construction requires a rather ad hoc strategy. Following the approach of Burago--Ivanov~\cite{BI08}, we prove that for a generic pair $(\xi_-, \xi_+)$, the associated plane field is tangent to a \emph{branching foliation}. Namely, the leaves are allowed to intersect, but not to cross topologically. The presence of singularities and closed orbits for $X$ greatly complicates the task, and the tightness condition on $\xi_\pm$ prevents certain obstructions from appearing.
    
    \item The last step is to obtain a genuine $C^0$-foliation from the branching foliation tangent to $\eta$. The strategy already appears in~\cite{BI08} and consists of separating the leaves of the branching foliation, at the expense of a small $C^0$ perturbation of $\eta$. This perturbation heavily depends on the structure of the leaves of the branching foliation, which itself depends on multiple intractable choices made throughout the construction. Therefore, while we maintain some control on the tangent plane field of the resulting foliation, it is in general impossible to keep track of any information pertaining to its leaves.
\end{itemize}

We provide more details on these steps in the following sections.

    \subsection{Unstable plane field}

We fix a positive contact pair $(\xi_-, \xi_+)$ on $M$, and we write
$$\Delta \coloneqq \{ p \in M : \xi_-(p) = \xi_+(p)\}.$$
We then consider a vector field $X$ such that 
\begin{itemize}
    \item $X$ vanishes exactly along $\Delta$,
    \item Away from $\Delta$, $\xi_- \cap \xi_+ = \langle X \rangle$.
\end{itemize}
Note that such a vector field is not unique, but its direction and normalization can be fixed as follows. First, we consider an arbitrary volume form $\mathrm{dvol}$ on $M$, and we denote by $\alpha_\pm$ the unique contact forms for $\xi_\pm$ satisfying \begin{align} \label{eq:balanced}
\alpha_+ \wedge d\alpha_+ = - \alpha_- \wedge d\alpha_- = \mathrm{dvol}.
\end{align}
Then, we define $X$ as the unique vector field satisfying $\iota_X \mathrm{dvol} = \alpha_- \wedge \alpha_+$. One easily checks that $X$ is independent of the choice of $\mathrm{dvol}$.

\smallskip

Since $X$ is tangent to both $\xi_-$ and $\xi_+$, these contact structures twist along $X$ in opposite directions. The idea to obtain an integrable plane field from the contact pair is to flow them for a very long time along $X$, and prove that the resulting plane fields converge to a given continuous plane field. The latter would be tangent to $X$, and invariant under its flow, so it would be locally integrable at least away from $\Delta$ by Lemma~\ref{lem:tangentint}. This strategy was implemented by Mitsumatsu~\cite{M95} and Eliashberg--Thurston~\cite{ET} for transverse contact structures, and in~\cite{CF11} for suitable positive pairs. In~\cite{Mas24} we generalized this procedure to an arbitrary positive contact pair, and showed:

\begin{prop}[See~\cite{Mas24}, Theorem A] \label{prop:limits}
    Let $(\xi_-, \xi_+)$ be any positive contact pair on $M$, with associated vector field $X$, and define 
    $$\xi^t_\pm \coloneqq (\varphi^t_X)_* \xi_\pm.$$
    Then both limits $\lim_{t \rightarrow +\infty} \xi^t_\pm$ exist and coincide with a continuous plane field $\eta$. Moreover, the assignment $(\xi_-, \xi_+) \mapsto \eta$ is continuous (for the $C^\infty$ topology on the source and the $C^0$ topology on the target). 
\end{prop}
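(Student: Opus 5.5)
To prove Proposition~\ref{prop:limits}, I would work in the projectivized normal bundle of $X$ and reduce the problem to a contraction statement along flow lines. Away from $\Delta$, any plane containing $X(p)$ is determined by a line in the $2$-plane bundle $\nu = TM/\langle X\rangle$. Write $\lambda_\pm(p)$ for the lines corresponding to $\xi_\pm$. The linearized flow $d\varphi^t_X$ acts on $\mathbb{P}(\nu)$, which has circle fibers. The contact condition for $\xi_+$ says that $\xi_+$ rotates strictly monotonically in one direction under the flow, and the contact condition for $\xi_-$ says that $\xi_-$ rotates in the opposite direction (by the twisting lemma above). Positivity of the pair, i.e.\ the existence of $Z$, means that $\lambda_-$ and $\lambda_+$ never cross through the $Z$-direction. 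So at each point, the arc $I_p$ of $\mathbb{P}(\nu_p)$ running from $\lambda_-(p)$ to $\lambda_+(p)$ and avoiding $Z$ is well defined.

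The twisting computation should show that the pushed-forward arcs are nested: $d\varphi^t(I_{\varphi^{-t}p}) \subset I_p$ for $t \ge 0$. The reason is that $\xi^t_+$ rotates toward $\xi_-$ and $\xi^t_-$ rotates toward $\xi_+$ as seen in the fixed frame. Hence $\xi_\pm^t(p)$ are the two endpoints of a decreasing family of arcs $J_t(p)$, and both limits exist if and only if the length of $J_t(p)$ tends to $0$. I would therefore define $\eta$ as the intersection $\bigcap_t J_t(p)$ and show that this intersection is a single line.

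To measure the arcs, I would use the balanced normalization~\eqref{eq:balanced}. This gives a natural "width" function $w = \alpha_-(Y)\alpha_+(Y')-\dots$, or more simply the angle between $\xi_\pm^t$ measured in a metric for which $Z$ is normal. One should then show that along each orbit the width decreases at a rate bounded below by a positive multiple of the width itself, with the constant controlled by $\alpha_\pm\wedge d\alpha_\pm = \pm \mathrm{dvol}$. This would give exponential decay, and hence uniform convergence, on compact subsets of $M\setminus\Delta$. Continuity of $\eta$ then follows from uniform convergence of the continuous plane fields $\xi_\pm^t$. Continuity of $\eta$ in the pair follows because all constants depend continuously on $(\xi_-,\xi_+)$ in $C^1$.

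The main obstacle is the behaviour near $\Delta$ and along orbits that spend a long time near $\Delta$. There $X$ vanishes, the rotation rate degenerates, and the projectivization is not defined at $\Delta$ itself. At a point of $\Delta$ we have $\xi_-=\xi_+$, and both are pushed by the flow only through the linearization $dX$, which fixes the point. So there I would define $\eta=\xi_\pm$ and check convergence directly from $d\varphi^t$ at a fixed point, where the twisting gives the correct contraction on the Grassmannian of $T_pM$. The delicate part is uniformity for orbits accumulating on $\Delta$. I would handle it by showing that the width is bounded by the angle between $\xi_-$ and $\xi_+$, which already tends to $0$ near $\Delta$, so that slow contraction near $\Delta$ is compensated by small initial width. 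Combining this estimate with the exponential contraction away from a small neighborhood of $\Delta$ should yield global $C^0$ convergence.
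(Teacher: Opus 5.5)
Your nested-arc setup is sound. Since $\xi^t_+(p)$ and $\xi^t_-(p)$ move monotonically toward each other in $\mathbb{P}(\nu_p)$, both limits exist. Everything then reduces to showing that $J_\infty(p)=\bigcap_t J_t(p)$ is a single line for every $p\notin\Delta$, and you even point to the right normalization. But the estimate that does this is only asserted. Moreover, your treatment of $\Delta$ would not close the argument if the contraction really degenerated there, as you assume.

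\begin{itemize}
\item The shrinking rate of $J_t(p)$ at time $t$ is governed by the data at $\varphi^{-t}(p)$, not at $p$. So even your claim on compact subsets of $M\setminus\Delta$ depends on what happens near $\Delta$.
\item We have $J_\infty(\varphi^t p)=d\varphi^t\bigl(J_\infty(p)\bigr)$, and an open set of points (e.g.\ everything repelled by a normally elliptic source arc of $\Delta$) has backward orbits converging to $\Delta$. For such $p$, contraction away from a neighbourhood of $\Delta$ acts only for a bounded time.
\item The bound $|J_\infty(q)|\le |I_q|$ for $q$ near $\Delta$ gives smallness, not degeneracy. This smallness is then transported to $p$ by the projectivized $d\varphi^t$, which is not an isometry.
\end{itemize}
So with a rate degenerating at $\Delta$, nothing in your argument excludes a nondegenerate limiting arc along these orbits, i.e.\ $\lim\xi^t_-\neq\lim\xi^t_+$ there.

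The missing point is that the contraction does not degenerate. The rotation speeds $\rho_\pm(q)$ of $\xi_\pm$ vanish at $\Delta$, but so does the width of $I_q$. After the normalization \eqref{eq:balanced}, the M\"obius-invariant ratio $\rho_+\rho_-/\sin^2|I_q|$ is identically $1$.

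Concretely (this is the paper's route), parametrize $I_q$ by $s\mapsto\ker(e^{s}\alpha_+-e^{-s}\alpha_-)$, $s\in[-\infty,+\infty]$. Expand $\iota_X d\alpha_\pm$ in the basis $(\alpha_+,\alpha_-)$. The conditions $\alpha_\pm\wedge d\alpha_\pm=\pm\mathrm{dvol}$ and $\iota_X\mathrm{dvol}=\alpha_-\wedge\alpha_+$ force the off-diagonal coefficients to equal $1$. The diagonal ones extend smoothly across $\Delta$; for example, the $\alpha_+$-coefficient of $\iota_X d\alpha_+$ is $-(\alpha_-\wedge d\alpha_+)/\mathrm{dvol}$.

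Hence the parameter of $\xi^t_\pm$ evolves along orbits by $\dot s=g-\sinh(2s)$, with $g$ smooth on all of $M$. Solutions issued from $\pm\infty$ enter a fixed bounded interval within time $1$, and any two solutions approach each other at rate at least $2$ because $\cosh\ge 1$. This gives $O(e^{-2t})$ convergence uniformly on $M$; in your terms, $\dot w\le -2\sin w$ for the angular width in any metric. Continuity of $\eta$ and of $(\xi_-,\xi_+)\mapsto\eta$ follows, with no separate analysis near $\Delta$. The paper packages the same fact as the unique bounded solution $\sigma$ of \eqref{eq:sigma}.

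At points of $\Delta$ there is no contraction to check either. Since $\alpha_\pm(X)=0$ and $X(p)=0$, we get $\mathrm{range}(dX_p)\subset\xi_\pm(p)$, so $d\varphi^t_p$ simply fixes $\xi_\pm(p)$.
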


We call $\eta$ the \textbf{unstable plane field} of the pair $(\xi_-, \xi_+)$, in analogy with the case where $X$ is a (projectively) Anosov flow. Such flows always lie at the intersection of a \emph{transverse} contact pair, and the flowing procedure of the previous proposition converges to the weak-unstable plane field of the flow.

\smallskip

To prove Proposition~\ref{prop:limits}, we use a rather indirect route. We actually show that there exists a unique plane field $\eta$ satisfying
\begin{itemize}
    \item $\eta$ is tangent to $X$ and invariant under the flow of $X$,
    \item $\eta$ coincides with $\xi_\pm$ along $\Delta$,
    \item $\eta$ is ``sandwiched'' between $\xi_-$ and $\xi_+$ in the appropriate quadrant away from $\Delta$.
\end{itemize}
To find such a plane field $\eta$, we choose contact forms $\alpha_\pm$ for $\xi_\pm$ satisfying~\eqref{eq:balanced}, and we look for a map $\sigma : M \rightarrow \R$ such that the $1$-form
$$\alpha \coloneqq  e^\sigma \alpha_+ - e^{-\sigma} \alpha_-$$
defines a plane field $\eta = \ker \alpha$ satisfying the above conditions. This turns out to be equivalent to an equation of the form
\begin{align} \label{eq:sigma}
    X \cdot \sigma + \sinh(2 \sigma) = g,
\end{align}
for some smooth function $g : M \rightarrow \R$ determined by the contact pair. Restricted to a flow line $(\varphi^t_X(p))_t$ of $X$, this reduces to an ODE with a very unstable behavior in negative time. In particular, there is a unique initial value for this ODE such that the corresponding solution is defined for all times and stays bounded. We then define $\sigma(p)$ to be this initial value. Standard results about the dependence of solutions of ODEs on initial values and parameters imply that this procedure defines a continuous map $\sigma : M \rightarrow \R$ which is differentiable along $X$ and solves~\eqref{eq:sigma}. Moreover, this map depends continuously on $(\xi_-, \xi_+)$ for similar reasons. Notice that the specific shape of $\Delta$ is irrelevant in this proof.

    \subsection{Integrating the unstable plane field}

By the previous section, we can associate to each positive contact pair $(\xi_-, \xi_+)$ a pair $(X, \eta)$ made of 
\begin{itemize}
    \item A smooth vector field $X$ tangent to both $\xi_\pm$ and vanishing along $\Delta$,
    \item A continuous plane field $\eta$ sandwiched between $\xi_-$ and $\xi_+$, which is tangent to $X$ and invariant under its flow.
\end{itemize}
We call the pair $(X, \eta)$ a \textbf{polarized vector field}. Away from $\Delta$, $\eta$ is locally integrable by Lemma~\ref{lem:tangentint}. However, the local behavior of $\eta$ near points in $\Delta$ is much less clear, as $\Delta$ can be quite complicated in general. Nevertheless, for a \emph{generic} pair $(\xi_-, \xi_+)$, $\Delta$ is a smooth embedded link in $M$, and the vector field $X$ has tractable singularities along $\Delta$. It has the following structure: $\Delta$ is transverse to $\xi_\pm$ away from finitely many points $Q \subset \Delta$ where it is tangent to them with quadratic tangencies. The connected components of $\Delta \setminus Q$ are of two types:
\begin{itemize}
    \item Arcs or circles made of normally elliptic singularities, i.e., $1$-parameter families of $2$-dimensional source singularities for $X$,
    \item Arcs or circles made of normally hyperbolic singularities, i.e., $1$-parameter families of $2$-dimensional saddle singularities for $X$.
\end{itemize}
The quadratic points in $Q$ lie at the junction between source and saddle arcs, and $\Delta$ looks like the movie of the elimination of an elliptic singularity with a hyperbolic singularity near those points. In particular, the singularities in $Q$ can be described as hybrids between source and saddle singularities.

The careful analysis of the behavior of $X$ and $\eta$ near $\Delta$ yields:

\begin{prop}[See~\cite{Mas24}, Theorem A]
    Under the previous assumptions, the continuous plane field $\eta$ is locally integrable \emph{everywhere} on $M$.
\end{prop}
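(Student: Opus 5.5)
Away from $\Delta$ there is nothing to prove: $X$ is a nonvanishing smooth vector field tangent to $\eta$ whose flow preserves $\eta$, so Lemma~\ref{lem:tangentint} produces a $C^1$ disk tangent to $\eta$ through every $p \in M \setminus \Delta$. The plan is therefore to treat the three types of points of $\Delta$ separately, using local normal forms for the generic pair: points on normally elliptic arcs, points on normally hyperbolic arcs, and the finitely many quadratic points of $Q$. In each case I will use the structure of $\eta$ from the previous subsection: it is tangent to $X$, invariant under its flow, coincides with $\xi_\pm$ along $\Delta$, and is sandwiched between $\xi_-$ and $\xi_+$. First I would choose coordinates $(x,y,z)$ near $p \in \Delta \setminus Q$ with $\Delta = \{x=y=0\}$ transverse to $\xi_\pm$. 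In these coordinates $X$ is, to first order, a $z$-dependent family of linear vector fields in the $(x,y)$-plane (a source or a saddle), plus higher-order terms, and $\eta(p) = \xi_\pm(p)$ is transverse to $\Delta$.

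\emph{Elliptic arcs.} Near a source point, every orbit of $X$ off $\Delta$ converges to $\Delta$ in negative time. I would build the tangent surface as a union of orbits, $\Sigma = \bigcup_{t \le 0}\varphi_X^{-t}(\cdot)$ applied to a small curve. Concretely, I would take a small circle $c$ around $\Delta$ in the level surface through $p$, or rather a curve transverse to $X$. This curve should be tangent to the characteristic line field of $\eta$ on a small torus $T$ around $\Delta$ that is transverse to $X$, and it should close up or be chosen via Cauchy--Peano as in Lemma~\ref{lem:tangentint}. Saturating it by the backward flow of $X$ gives a surface containing $\Delta$ locally. I then need to show that this surface is $C^1$ at $\Delta$ and tangent to $\eta(p)$ there. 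For this I use that $\eta$ is squeezed between $\xi_-$ and $\xi_+$, which coincide along $\Delta$: the angle between $\eta$ and $\xi_\pm(p)$ tends to $0$ as one approaches $\Delta$, and this should give the $C^1$ extension.

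\emph{Hyperbolic arcs.} Here the local stable/unstable structure of the normal saddle gives two smooth invariant surfaces: the weak stable manifold $W^s$ and the weak unstable manifold $W^u$ of the curve of saddles. Both contain $\Delta$ and are tangent to $X$. The sandwich condition picks out the one whose tangent plane along $\Delta$ equals $\xi_\pm(p)$. More simply, since $\eta$ is $X$-invariant and continuous at $\Delta$, I will show that the invariant surface through $\Delta$ spanned by $T\Delta$ and the eigendirection lying in $\xi_\pm(p)$ is tangent to $\eta$. Near each of its points off $\Delta$, $\eta$ restricted to it is the span of $X$ and a transverse direction, which must be tangent by invariance and uniqueness of invariant manifolds. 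If instead $\xi_\pm(p)$ contains neither invariant surface, I use a surface through $p$ made of orbits converging to $\Delta$ in a sector, again closed up in $C^1$ by the sandwich estimate.

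\emph{Quadratic points.} These are the main obstacle: $\Delta$ is tangent to $\xi_\pm$ there, and the normal form is a birth–death movie. I would try first to find a surface through $q \in Q$ as a limit of the surfaces already constructed at nearby points $p_n \to q$ on the elliptic side. The surfaces are graphs over $\eta(p_n)$ of $C^1$ functions with uniformly controlled derivatives, thanks to the sandwich between $\xi_\pm$. A uniform $C^1$ bound then yields, by Arzelà--Ascoli, a limiting $C^1$ graph through $q$. Since tangency to a continuous plane field is a closed condition under $C^1$ convergence, the limit is tangent to $\eta$. The hard part is the uniform bound: I must check in the explicit normal form $X \approx \partial$-terms of $(x^2 - z)\partial_x + \dots$ that the sandwich cone between $\xi_-$ and $\xi_+$ has aperture controlled linearly by the distance to $\Delta$, uniformly across $Q$. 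This same limiting argument could in fact be used uniformly at all points of $\Delta$ once such cone estimates are established.
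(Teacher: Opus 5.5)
Your overall decomposition matches the paper's: Lemma~\ref{lem:tangentint} away from $\Delta$, then a local analysis at elliptic, hyperbolic and quadratic points. The local analysis, however, contains an outright error and two gaps.

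\textbf{The hyperbolic case is wrong as written.} The weak stable and weak unstable manifolds of the saddle arc contain $\Delta$, as does any surface spanned by $T\Delta$ and an eigendirection. So their tangent plane at $p$ contains $T_p\Delta$. But, as you note yourself, $\eta(p)=\xi_\pm(p)$ is transverse to $\Delta$ for $p\notin Q$. None of these surfaces is tangent to $\eta$ at $p$, so there is nothing for the sandwich condition to pick out.

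Your fallback is also off. At a normal saddle, the only orbits converging to $p$ are those on the strong stable and strong unstable curves $W^{ss}(p)$ and $W^{uu}(p)$; orbits in the sectors pass by $p$ and leave. A leaf through $p$ is transverse to $\Delta$, since it is tangent to $\xi_\pm(p)$. Being locally $X$-invariant, it must also contain the cross $W^{ss}(p)\cup W^{uu}(p)$. A workable construction is to saturate by the forward flow a Cauchy--Peano curve tangent to $\eta$ crossing each branch of $W^{ss}(p)$. One then estimates the drift along $\Delta$ of orbits passing near $p$ to show that these saturations close up onto $W^{uu}(p)$ as a graph, which is then $C^1$ by continuity of $\eta$.

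\textbf{The elliptic case has a gap at the closing step.} Each backward orbit near a normally repelling arc converges to its own point of $\Delta$. Hence the backward saturation of a curve $c$ accumulates on the projection of $c$ to $\Delta$ along the strong unstable fibration. It is a disk through $p$ only if $c\subset W^u(p)$; such a disk meets $\Delta$ only at $p$ and is not a surface ``containing $\Delta$''. Choosing $c$ by Cauchy--Peano and hoping it closes does not give this.

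The missing idea is that near an elliptic arc, $\eta$ \emph{equals} the tangent plane field of the smooth fibration by unstable disks $W^u(p)$. In coordinates straightening this fibration, the coordinate labelling the fibers is a first integral of $X$. The flow expands the fiber directions, so $X$-invariance of $\eta$, together with continuity at $\Delta$ (where $\eta=\xi_\pm=TW^u$), forces the slope of $\eta$ relative to the fibers to vanish. The leaf through $p$ is then simply $W^u(p)$.

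\textbf{At the quadratic points, the missing ingredient is uniform size.} Your compactness step is fine in itself. In a chart where $\eta=\ker(dz-F_1\,dx-F_2\,dy)$, tangency reads $\nabla f_n=F(\cdot,f_n)$. So both the derivative bound and the equicontinuity of the derivatives are automatic, and a uniform limit is $C^1$ and tangent to $\eta$. But this needs the tangent surfaces through $p_n$ to be graphs over domains of \emph{uniform size}, and that is exactly what fails for the surfaces you built.

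Near $q\in Q$, one normal eigenvalue tends to $0$, and the unstable disk of an elliptic point is cut off, on the side of the hyperbolic arc, by the strong unstable curve of the nearby saddle. In your model $X=(x^2-z)\partial_x+y\partial_y$, the source $(\sqrt{c},0,c)$ has $W^u=\{x>-\sqrt{c},\ z=c\}$. Its extent toward the saddle $(-\sqrt{c},0,c)$ is $2\sqrt{c}\to 0$.

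Uniform size requires continuing the elliptic leaves through the hyperbolic sectors of the nearby saddles. That means gluing the elliptic and hyperbolic constructions coherently across the birth--death region, and handling the degenerate saddle-node dynamics at $q$ itself, where the normal linearization has a zero eigenvalue. This is the real content of the analysis at $Q$.

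The cone estimate you single out does not help. An aperture $O(\mathrm{dist}(\cdot,\Delta))$ for the sandwich holds trivially because $\xi_\pm$ are smooth and agree on $\Delta$. It says nothing about the size of leaves, so it cannot make the limiting argument work uniformly along $\Delta$ either.
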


However, $\eta$ might fail to be tangent to a foliation. In~\cite[Example 2.2.9]{ET}, the authors exhibit a pair of transverse contact structures whose corresponding unstable plane field is not tangent to a foliation. Nevertheless, the vector field $X$ itself provides some extra structure that can be used to patch together local surfaces tangent to $\eta$; since $X$ has some potentially very nontrivial global dynamics, much care is needed to ensure that this procedure yields \emph{noncrossing} surfaces. This is exactly what the strategy described in~\cite{BI08} manages to achieve. 

    \subsection{Branching foliations: an overview}

Let $(X, \eta)$ be a polarized vector field obtained from a generic positive contact pair $(\xi_-, \xi_+)$. The plane field $\eta$ naturally comes with a coorientation induced by the contact pair. If $X$ has no singularities (i.e., $\xi_-$ and $\xi_+$ are transverse everywhere), then the strategy developed by Burago--Ivanov in~\cite{BI08} implies that $\eta$ is tangent to a \textbf{branching foliation.} 

\subsubsection{Definition}

A branching foliation tangent to $\eta$ is defined as a collection of immersed surfaces $f_i : \Sigma_i \looparrowright M$, $i \in I$, which are tangent to $\eta$ and maximal for the inclusion, which cover $M$, and which are allowed to intersect but not to \emph{topologically cross} each other (or themselves). Two such immersions $f_i$ and $f_j$, $i,j \in I$ have a \textbf{topological crossing} if there exists a $C^1$ curve $\gamma : [0,1] \rightarrow M$ contained in both surfaces, such that near $\gamma(0)$, $f_i(\Sigma_i)$ lies below $f_j(\Sigma_j)$ (with respect to the coorientation on $\eta$) and is not entirely contained in it, while near $\gamma(1)$, $f_j(\Sigma_j)$ lies below $f_i(\Sigma_i)$ and is not entirely contained in it. Here, we allow $i=j$ as the immersions are not assumed to be injective.

\subsubsection{An obstruction}

In our more general setup, $X$ is allowed to have singularities and closed orbits. In this context, the strategy of Burago--Ivanov needs to be adapted, and certain obstructions might appear. Those take the form of \textbf{disks of tangency}, defined as immersions $f : \overline{D} \looparrowright M$ of the closed $2$-disk which are tangent to $\eta$ and with boundary tangent to $X$. The presence of such objects might prevent the extension procedure of Burago--Ivanov from going through; roughly speaking, these disks of tangency are ``traps'' which cannot be glued to tangent surfaces with nonclosed boundary components. This boils down to the fact that the disk is the only compact surface whose universal cover has a closed boundary component. Fortunately, we have:

\begin{prop}[See~\cite{Mas24}, Proposition 2.8]
    If one of $\xi_\pm$ is tight, or if $\Delta$ has saddle singularities only, then $(X, \eta)$ has no disks of tangency.
\end{prop}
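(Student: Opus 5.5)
We argue by contradiction: suppose $f:\overline{D}\looparrowright M$ is a disk of tangency. The plan is to analyze the dynamics of $X$ pulled back to $\overline{D}$, locate a singularity of $X$ in the interior of $D$, and then use its type to reach a contradiction in each case of the hypothesis.

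First we set up the pulled-back flow. Since $f$ is tangent to $\eta$ and $\eta$ contains $X$, we can pull $X$ back to a continuous (in fact $C^1$ along the immersion, since $X$ is smooth and tangent) vector field $\tilde X$ on $\overline{D}$. This field is tangent to $\partial D$. Away from $\Delta$, the surface $f(D)$ is saturated by flow lines of $X$: $\eta$ is invariant under the flow, and $X\in\eta$ makes the orbit segments lie on the tangent surface. This holds at least locally, using the flow-box picture of Lemma~\ref{lem:tangentint}.

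Next we show that $\tilde X$ must vanish somewhere. The boundary $\partial D$ is either a closed orbit of $X$ or a cycle of orbits and singular points. By Poincaré--Hopf, or simply Brouwer applied to the time-one flow on the disk, $\tilde X$ has a zero in $\overline{D}$. Its zeros map to $\Delta$, so $f(\overline D)\cap \Delta\neq\emptyset$. At a point of $\Delta$, the plane $\eta=\xi_\pm$ is the common tangent plane. For generic pairs, $\Delta$ is transverse to $\xi_\pm$ away from $Q$, so $f$ meets $\Delta$ transversally in isolated points, and these are the zeros of $\tilde X$. At such a point the index of $\tilde X$ is $+1$ for an elliptic (source) point and $-1$ for a saddle point. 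Points of $Q$ need separate care; I would first perturb within the generic class, or use the hybrid local model, to argue that a tangent disk cannot pass through $Q$ in a way that contributes index.

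Now we use the index count. The index sum on $\overline{D}$ equals $\chi(D)=1$, counting boundary zeros with half-indices, or after doubling. Hence there must be at least one interior source.

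Case of saddles only. If $\Delta$ has saddle singularities only, there are no sources, so we get an immediate contradiction with the index count.

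Case of tightness. Here we have an interior point $p\in f(D)\cap\Delta$ of source type, and the flow of $\tilde X$ on $\overline{D}$ has a repelling fixed point with $\partial D$ invariant. The plan is to build an overtwisted disk from $f$, say for $\xi_+$; the case of $\xi_-$ is symmetric. Take the surface $f(D)$ and push it slightly in the coorientation direction, or tilt it, to make it transverse to $\xi_+$ away from $p$. Since $\eta$ lies strictly between $\xi_-$ and $\xi_+$ away from $\Delta$ and equals them on $\Delta$, the characteristic foliation of $\xi_+$ on a $C^0$-small perturbation of $f(D)$ should be directed by $\tilde X$ plus a small rotation. That gives a source at $p$ and trajectories exiting towards a boundary along which $\xi_+$ can be made tangent, or at least along which the boundary is Legendrian with twisting number zero relative to the disk. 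This is exactly the characteristic foliation of $D_{\mathrm{OT}}$, up to Giroux flexibility. Immersed rather than embedded is fine: lifting to a cover where the disk embeds, or using that tightness is detected in the universal cover for these purposes, or arguing that the Bennequin inequality fails for the Legendrian boundary, gives the contradiction.

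The main obstacle is this last step. We must make the perturbation of $f$ controlled enough that the boundary becomes Legendrian with $\operatorname{tb}=0$ while $f$ is only $C^1$ and $\eta$ only continuous. It also requires dealing with multiple interior singularities, including saddles, and with boundary singularities. I would try first to use the Legendrian realization and Bennequin inequality, $\operatorname{tb}(\partial D)\le -1$ for tight structures, since this only needs the framing of $\partial D$ given by $\eta$, which agrees with $\xi_+$'s framing up to a quadrant rotation.
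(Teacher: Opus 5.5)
Your saddle-only argument is fine. Off $\Delta$ the field $f^*X$ is nonzero and tangent to the boundary orbit, and its zeros are transverse intersections of $f$ with $\Delta$, each of index $-1$. Here $Q=\varnothing$, since points of $Q$ sit at junctions of elliptic and hyperbolic arcs. This contradicts Poincaré--Hopf.

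For the tight case, the twisting computation needs neither the interior source nor any perturbation. Since $f$ is tangent to $\eta$ and $\xi_+\cap\eta=\langle X\rangle$ off $\Delta$, the characteristic foliation of $\xi_+$ on $f(\overline D)$ is directed by $f^*X$ itself. Along the boundary closed orbit $\gamma=f(\partial D)$, which avoids $\Delta$, $\xi_+$ is transverse to the disk, so its twisting relative to the framing induced by $f$ is $0$. This uses that $\gamma$ is a genuine closed orbit. On a ``cycle of orbits and singular points'', each point of $\Delta$ is a tangency contributing a negative half-twist, so your general boundary case would not give twisting $0$. Also, ``perturbing within the generic class'' is not a legitimate way to treat $Q$, since the disk does not survive a perturbation of the pair; you simply do not need it.

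\textbf{The gap.} The real problem is the step you call fine: going from an \emph{immersed} disk with this property to a contradiction with tightness. None of your three justifications works.
\begin{itemize}
\item Tightness is not detected by covers. Tight but virtually overtwisted structures exist, for example on lens spaces. Projecting an embedded overtwisted disk from a finite cover gives exactly an immersed disk with Legendrian boundary, zero twisting relative to the disk, and the standard overtwisted characteristic foliation, inside a tight manifold.
\item Lifting to the universal cover does not help either: the lift of $f$ need not be embedded, and the lifted structure need not be tight.
\item Bennequin's inequality needs an embedded knot and an embedded Seifert surface. Here $f|_{\partial D}$ may multiply cover $\gamma$. When the interior of $f$ meets $\gamma$, the framing induced by $f$ may differ from the Seifert framing, so twisting $0$ relative to $f$ says nothing about $\mathrm{tb}$.
\end{itemize}

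What is missing is a mechanism, using the specific structure of disks of tangency, that produces an embedded disk or an embedded overtwisted disk. One possible route:
\begin{itemize}
\item $f(\overline D)$ is invariant under the flow of $X$, and $f$ has finite fibres. Hence any interior point mapped to $\gamma$ lies on a closed orbit of $f^*X$ covering $\gamma$.
\item Passing to an innermost such orbit gives a disk of tangency whose interior misses $\gamma$.
\item If its boundary embeds, Dehn's lemma in the exterior of $\gamma$ replaces it by an embedded disk inducing the same framing. Then $\mathrm{tb}(\gamma)=0$ for a Legendrian unknot, which contradicts tightness.
\item The case where the boundary multiply covers $\gamma$ still needs its own argument.
\end{itemize}
Without some such step, the tight case is not proved.
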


\subsubsection{Main technical result}

At the heart of the proof of Theorem~\ref{thm:mainthm} is the adaptation of the strategy of Burago--Ivanov to construct a branching foliation tangent to $\eta$. More precisely, we showed:

\begin{thm}
    Let $(\xi_-, \xi_+)$ be a generic positive contact pair on $M$, such that at least one of $\xi_\pm$ is tight, or so that the associated vector field $X$ has saddle singularities only. Then its unstable plane field $\eta$ is tangent to a branching foliation.
\end{thm}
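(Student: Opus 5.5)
We will follow the Burago--Ivanov strategy of~\cite{BI08}, adapted to the singular locus $\Delta$ and to closed orbits of $X$, with the absence of disks of tangency (from the preceding proposition) as the key global input.

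\textbf{Step 1: local tangent surfaces.} First we build a canonical local supply of tangent surfaces. Away from $\Delta$, Lemma~\ref{lem:tangentint} gives local tangent disks: in a flow box for $X$, they are products of a flow segment with solutions of the one-dimensional ODE $z'(y)=f(y,z)$. Among all solutions through a point we will always select the \emph{maximal} (uppermost, for the coorientation of $\eta$) one. The uppermost solution of a scalar ODE with continuous coefficients is unique, and two uppermost solutions never cross. Near $\Delta$ we use the explicit local models (source arcs, saddle arcs, quadratic points in $Q$) and the local integrability from the previous proposition. There we write down canonical tangent ``germs'': unions of $X$-invariant local surfaces that we again select as extremal from above, so that the noncrossing property is preserved.

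\textbf{Step 2: the extension procedure.} Following~\cite{BI08}, we pass to the universal cover $\widetilde M$ and consider the lifted polarized field $(\widetilde X,\widetilde\eta)$. We then define leaves as maximal surfaces obtained by continuing extremal local surfaces along curves tangent to $\eta$. The recipe is to always take the uppermost continuation, i.e.\ the supremum of all tangent surfaces through a point that lie on a given side. We show this family is closed under limits, using Arzel\`a--Ascoli on graphs in foliation charts, and that it is invariant under $\varphi_X^t$ and deck transformations. Noncrossing then follows from the one-dimensional comparison principle: two extremal continuations that touch cannot swap order along a tangent curve, because the order is preserved along $X$-flow lines and along characteristic curves in transverse slices. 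Closed orbits of $X$ must be handled separately. There the continuation along the orbit might return with a nontrivial ``holonomy''. We resolve this by taking the extremal fixed point of the induced one-dimensional return map on a transverse arc, which exists by monotonicity.

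\textbf{Step 3: completeness of the leaves, the main obstacle.} The hardest step is showing that the extension process yields \emph{complete} surfaces covering $M$. The danger is that a tangent surface grown from a point closes up onto a compact region bounded by an $X$-invariant curve, which happens near source arcs of $\Delta$ or along periodic orbits. Such a region cannot be extended further, because the continuation leaves no open boundary to glue across. We plan to argue by contradiction. If some leaf fails to be extendable, the maximality procedure produces a compact tangent piece whose boundary is tangent to $X$. Passing to its universal cover, the only possibility is an immersed closed disk, i.e.\ a disk of tangency. This is ruled out by the previous proposition, under the hypothesis that one of $\xi_\pm$ is tight or that $\Delta$ has only saddle singularities.

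The delicate point is proving that the obstruction really reduces to a \emph{disk}. We must show that a non-extendable boundary component is a closed $X$-invariant curve (a periodic orbit, or a union of orbits and singular points of $\Delta$), and that the surface it bounds is simply connected. The latter should follow from the fact that the disk is the only compact surface whose universal cover has a closed boundary component. We expect this topological reduction near $\Delta\cap Q$, where the dynamics is a source/saddle hybrid, to require the most care.
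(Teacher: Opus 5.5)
\textbf{There is a genuine gap: your noncrossing mechanism in Steps 1--2 fails, starting with the one-dimensional claim it rests on.}

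It is false that uppermost solutions of a scalar ODE with continuous coefficients never cross. Work in a flow box with $X=\partial_x$ and $\partial_z$ positively transverse, and take $\eta=\mathrm{span}\{\partial_x,\ \partial_y+\sqrt{|z|}\,\partial_z\}$. Tangent surfaces are then $x$-independent graphs $z=w(y)$ with $w'=\sqrt{|w|}$.

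\emph{The two selected solutions.} The uppermost solution through $(y,z)=(-1,-\tfrac14)$ is $u(y)=\tfrac14 y|y|$: it is forced while $z<0$, then leaves $z=0$ as fast as possible. The uppermost solution through $(1,0)$ is $v=0$ for $y\le 1$ and $v=\tfrac14(y-1)^2$ for $y\ge 1$. Indeed, since $w'\ge 0$, every solution through $(1,0)$ is $\le 0$ for $y<1$.

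\emph{They cross.} We have $u<v$ on $y<0$ and $u>v$ on $0<y<1$, so the two selected surfaces cross along $\{y=z=0\}$. The comparison principle only says this: two selected curves meeting at $m$ are ordered on the side of $m$ facing away from each one's base point. When $m$ lies between the two base points, \emph{uppermost in both directions} makes each curve dominate on a different side of $m$, which is exactly a crossing.

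\emph{Closure under limits also fails.} For $w'=\sqrt{\max(w,0)}$, the uppermost solutions through $(0,-\epsilon)$ are $w\equiv-\epsilon$. They converge to $w\equiv 0$, which is not the uppermost solution through the origin.

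\textbf{The recipe is also not hereditary, so the continuation is not well defined.} The uppermost surface through a point need not be uppermost through its other points. In the example, $u$ passes through the origin, but the uppermost solution there is $0$ for $y\le 0$ and $\tfrac14 y^2$ for $y\ge 0$. So the \emph{uppermost continuation} depends on the chain of charts and base points you use. You give no argument that continuations are compatible along different tangent paths, around recurrent orbits, or across the saddle and source arcs of $\Delta$ and the points of $Q$. At those points $X$ vanishes, so the transverse direction $\eta/\langle X\rangle$ that any extremal rule relies on degenerates. Fixing closed orbits one at a time by extremal fixed points of return maps does not address this.

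\textbf{How the paper handles this.} This is precisely the difficulty the paper's proof is built around, and it uses no canonical selection at all. It constructs an increasing sequence of weak prefoliations. Each is a collection of contractible, $X$-saturated tiles with controlled behavior near $\Delta$, together with a partial preorder on marked tiles that refines the geometric order and is coherent along intersections. At each step it extends both the tiles \emph{and} the preorder. Noncrossing comes from the preorder axioms, not from extremality, and the resulting branching foliation depends on many auxiliary choices.

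The absence of disks of tangency is used inside this extension step: such disks are traps that cannot be glued to tiles with nonclosed boundary. So your Step 3 is in the right spirit, but it cannot make up for the missing noncrossing argument.
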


While $\eta$ is canonical, the branching foliation that we construct \emph{is not}: it is obtained through an inductive procedure that requires various auxiliary choices at each step. While our method is constructive, it is essentially impossible to keep track of how these choices influence the final branching foliation. Moreover, $\eta$ might admit infinitely many distinct branching foliations tangent to it!

\subsubsection{Weak prefoliations}

To adapt the strategy of Burago--Ivanov and construct a branching foliation tangent to $\eta$, we inductively construct collections of immersed surfaces tangent to $\eta$, and we inductively extend them until they become maximal. During the construction, we also keep track of how the surfaces intersect, by recording the data of a (pre)order at points where they coincide. This ensures that the successive extensions won't create topological crossings. 

More precisely, we consider a class of immersions of surfaces in $M$ that we call \textbf{tiles}. Those are parametrized by surfaces with boundary and corners homeomorphic to contractible subsets of $\R^2$, which are saturated by the flow lines of $X$, and with controlled behavior near the singular set $\Delta$. 

Generalizing the \emph{prefoliations} from~\cite{BI08}, we then define a \textbf{weak prefoliation} as a collection of tiles $\mathscr{A}$, together with a partial preorder $\precsim$ on the set of \emph{marked} tiles in $\mathscr{A}$. Here, a marked tile is a tile together with a marked point in its domain of definition. The preorder $\precsim$ satisfies a certain number of natural axioms:
\begin{enumerate}
    \item[0.] Two marked tiles $f_\star$ and $g_\star$ are comparable by $\precsim$ if and only if they send their marked points to a common point in $M$. Moreover, if both $f_\star \precsim g_\star$ and $g_\star \precsim f_\star$ hold, then there exists a diffeomorphism $\varphi$ of their domains satisfying $f = g \circ \varphi$, and $\varphi$ sends the marked point of $f_\star$ to the marked point of $g_\star$. In that case, we say that $f_\star$ and $g_\star$ are \emph{geometrically equivalent}. However, we don't need to require that the converse holds: two geometrically equivalent marked tiles may not be equivalent with respect to $\precsim$.
    \item [1.] The preorder $\precsim$ refines the geometric order, in the sense that if $f_\star \precsim g_\star$, then $g_\star$ is geometrically above $f_\star$ near the point in $M$ where they meet, with respect to the coorientation on $\eta$.
    \item [2.] Finally, the preorder $\precsim$ is coherent along intersections. Roughly speaking, this means that the relative preorder between two marked tiles doesn't change when we slide the marked points along a curve contained in their intersection.
\end{enumerate}

These properties ensure that two tiles in $\mathscr{A}$ don't topologically cross. Moreover, this notion is flexible enough to inductively extend the tiles. In fact, in~\cite{Mas24}, we construct a sequence of weak prefoliations 
$$\varnothing = \mathscr{A}_0 \hookrightarrow \mathscr{A}_1 \hookrightarrow \dots \hookrightarrow \mathscr{A}_n \hookrightarrow \mathscr{A}_{n+1} \hookrightarrow \dots,$$
such that each extension increases the sizes of the tiles significantly. Passing to the (co)limit, we obtain a collection of \emph{maximal} tiles without boundaries which cover $M$ and without topological crossings. In other words, the resulting object is a branching foliation tangent to $\eta$. We note that the extension procedure is particularly tricky, since one has to extend not only the tiles themselves but also the preorder structure; the singularities of $X$ and its global dynamics make the extension of the preorder a particularly delicate task.

\subsubsection{Separating the leaves}

Now that we have constructed a branching foliation tangent to $\eta$, we obtain a genuine $C^0$-foliation by separating the leaves of the branching foliation. More precisely, we use:

\begin{thm}[See~\cite{BI08}, Section 7.1]
    If $\eta$ is a $C^0$ plane field tangent to a branching foliation, then $\eta$ is $C^0$-approximated by plane fields tangent to foliations. Moreover, for each approximating foliation $\mathcal{F}$, there exists a continuous surjective map $h: M \rightarrow M$ such that the restriction of $h$ to every leaf of $\mathcal{F}$ is a $C^1$ immersion tangent to $\eta$.
\end{thm}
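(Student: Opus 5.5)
The plan is to follow the ``blow-up'' strategy of Burago--Ivanov. Fix a branching foliation $\{f_i : \Sigma_i \looparrowright M\}_{i\in I}$ tangent to $\eta$, a Riemannian metric, and a smooth unit vector field $N$ transverse to $\eta$ and positively oriented with respect to its coorientation. The idea is to thicken $M$ along $N$ so that leaves which currently coincide, or touch, are pulled apart. The comparison map $h$ will then collapse the inserted material back to $M$.

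First I would extract a countable family of leaves. Choose a countable dense set $\{p_n\} \subset M$. For each $n$, pick a leaf $L_n$ through $p_n$. If several leaves pass through $p_n$, I would instead take the lowest and the highest ones, which exist by the noncrossing property and compactness. Because leaves do not topologically cross, the local order ``above/below'' along $N$-arcs is well defined on these leaves, and it is coherent along intersections.

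Next I would construct the new manifold. Along each short $N$-arc, the local plaques of the $L_n$ form a countable totally ordered family, possibly with coincidences. I would insert an interval of length $\epsilon_n$ for each $L_n$, with $\sum_n \epsilon_n < \infty$, and then rescale. This replaces each plaque by a product with an interval, in the spirit of Denjoy blow-up. Concretely, I would define a new manifold $\widehat M$ as the completion of $M$ with respect to a pseudo-metric on $N$-arcs, in which the $N$-distance between two points is enlarged by $\sum \epsilon_n$ over the leaves $L_n$ separating them. The noncrossing property and coherence of the order along intersections are exactly what is needed for this enlargement to be consistent across flow boxes and to produce a $3$-manifold. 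That manifold is homeomorphic to $M$, since inserting intervals along a transverse $1$-dimensional structure does not change the topology. The collapse map is $h : \widehat M \cong M \to M$. The images of the blown-up leaves, together with the gaps filled by interpolating surfaces (the complementary regions are $I$-bundles over open subsets of leaves), form a genuine foliation $\mathcal F$. Since $h$ only collapses $N$-intervals, its restriction to each leaf of $\mathcal F$ is a $C^1$ immersion tangent to $\eta$. I would then take the tangent plane field of $\mathcal F$ to be, at each point, the $\eta$-plane at its $h$-image, transported via the chosen identification.

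Finally, the $C^0$-closeness comes from choosing the $\epsilon_n$ small and the identification $\widehat M \cong M$ close to $h$. Uniform continuity of $\eta$ then makes $T\mathcal F$ $C^0$-close to $\eta$.

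I expect the main obstacle to be the middle step: making $\widehat M$ a manifold on which the blown-up leaves actually assemble into a $C^0$-foliation with continuous tangent planes. The difficulties are that leaves may accumulate, that there are uncountably many leaves besides the chosen $L_n$, and that interpolation inside the gaps must vary continuously. My first attempt would be to define everything chart by chart in flow boxes adapted to $N$, using monotone maps of intervals. I would obtain continuity from the density of $\{p_n\}$ together with a limiting argument showing that limits of leaves are leaves of the branching foliation.
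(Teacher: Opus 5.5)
Your overall plan, a Denjoy-type blow-up of a countable dense family of leaves with the inserted slabs filled by product foliations and collapsed by $h$, is the separation-of-leaves strategy of the Burago--Ivanov argument the paper cites. However, the blow-up as you set it up does not exist. You give every plaque of $L_n$ the same thickness $\epsilon_n$ and only require $\sum_n\epsilon_n<\infty$. A leaf whose image is not a closed surface has infinitely many plaques in some box of a finite flow-box atlas, and each of them crosses the vertical $N$-arc $\tau$ of that box. When leaves are dense, every $N$-arc meets every noncompact $L_n$ infinitely often. So the enlarged length of $\tau$ is $\sum_n\epsilon_n\,\#(\tau\cap L_n)=\infty$. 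The completion is then not compact, let alone homeomorphic to $M$, and no rescaling repairs this. Counting ``the leaves $L_n$ separating two points'' once each is not an option either: leaves do not separate in $M$, and a slab must be inserted at every crossing for the blow-up to be consistent along the leaf.

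Your Denjoy analogy indicates the fix. In Denjoy's construction the inserted length is attached to each \emph{orbit point} and decays along the orbit. Here it must be attached to each plaque, so you need positive functions $w_n\colon L_n\to(0,\infty)$ that decay along the leaf. Only finitely many plaques of $L_n$ lie within leafwise distance $R$ of a basepoint, so you can choose them with $\sum_n\sum_{P\subset L_n}\|w_P\|_{C^1}<\delta$ in every box. Leaves whose image is a closed surface should be blown up as that image, with constant weight.

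Once the weights vary, your last step must change too. Take a box where plaques are graphs $z=u_P(x,y)$. Noncrossing makes the order between two plaques independent of the base point, so the leaf coming from a graph $u$ becomes the graph of $u+\sum_{P\prec u}w_P$. If $u=u_Q$ is itself a chosen plaque, its slab is foliated by the graphs of $u_Q+\sum_{P\prec Q}w_P+t\,w_Q$, $t\in[0,1]$. The slope of a new leaf is therefore $\nabla u+\sum_{P\prec u}\nabla w_P$, not the slope of $\eta$ at the $h$-image. So you cannot choose $T\mathcal F$ to be the transported $\eta$: it is determined by the leaves. What makes the new leaves $C^1$, makes $T\mathcal F$ continuous (through uniform control of the tails), and makes it $C^0$-close to $\eta$ is the summable $C^1$-smallness of the weights. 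Uniform continuity of $\eta$ only accounts for the displacement of $h$. For the same reason, the identification $\widehat M\cong M$ should be the explicit rescaling along $N$ in these boxes, not an abstract homeomorphism, since the $C^1$ structure of the leaves depends on it.

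Lastly, you do not need the lowest and highest leaves through $p_n$, and their existence would require a closedness property that is not part of the definition. Any plaque through a point strictly between two plaques is squeezed between them over the whole box by noncrossing. That squeezing is exactly what makes the leaf through a non-blown-up point unique.
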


Here, the map $h$ is collapsing the leaves of $\mathcal{F}$ to images of leaves of the branching foliation. We note that the branching foliation previously constructed only has contractible leaves, some of which can be covers of closed surfaces in $M$. In that case, the leaf is collapsed, and not ``separated from itself''. 

\section{Applications and conjectures}

We now present some applications of our main result, as well as some speculations about the $L$-space conjecture.

    \subsection{Surgeries transverse to taut foliations}

Our first application is a construction of surgeries on taut foliations along transverse knots (or links).

\subsubsection{Surgery along transverse knots}

Let $M$ be a closed, connected, oriented $3$-manifold as before, and let $K \subset M$ be a framed knot in $M$. If $s \in \Q$ is a rational number, we denote by $M_K(s)$ the manifold obtained from $M$ by performing a Dehn surgery of slope $s$ along $K$.

If $\mathcal{F}$ is a foliation on $M$ transverse to $K$, performing a nontrivial Dehn surgery along $K$ typically destroys $\mathcal{F}$, in the sense that $\mathcal{F}$ does not induce a foliation on $M_K(s)$. It rather induces a singular foliation, which is singular along the image of $K$ in $M_K(s)$. Instead, it is possible to perform a surgery on an approximating contact pair and obtain a taut contact pair on $M_K(s)$, provided that $s$ is large enough. In~\cite{Mas24}, we proved:

\begin{thm}[See~\cite{Mas24}, Theorem H] \label{thm:transfol}
Let $\mathcal{F}$ be a taut aspherical $C^0$-foliation on $M$, and $K \subset M$ be a framed knot transverse to $\mathcal{F}$. There exists $s_0 = s_0(\mathcal{F},K) \geq 0$ such that for every rational number $s \in \Q$ satisfying $\vert s \vert \geq s_0$, $M_K(s)$ carries a taut $C^0$-foliation $\mathcal{F}'$. Moreover, the image of $K$ in $M_K(s)$ is transverse to $\mathcal{F}'$.
\end{thm}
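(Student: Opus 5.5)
The plan is to go through contact pairs, using the correspondence of Theorem~\ref{thm:mainthm} in both directions. First, approximate $T\mathcal{F}$ by a positive contact pair $(\xi_-,\xi_+)$, using the $C^0$ version of the Eliashberg--Thurston theorem (Kazez--Roberts, Bowden). Since $\mathcal{F}$ is taut, the remarks after Proposition~\ref{prop:strongtight} make this pair taut. We can arrange that $K$ remains positively transverse to both $\xi_\pm$, since transversality is $C^0$-open. By Proposition~\ref{prop:strongtight}, there is a volume preserving vector field $Z$ positively transverse to $\xi_\pm$, or equivalently a closed $2$-form $\omega$ with $\omega_{\vert\xi_\pm}>0$. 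Using a flow box argument, we can also assume $Z$ is tangent to $K$ in a tubular neighborhood $N(K)\cong S^1\times D^2$.

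Next, normalize the pair near $K$. By a local isotopy we put $\xi_\pm$ in standard form on $N(K)$, say
$$\xi_\pm=\ker\big(d\theta \pm \epsilon r^2\,d\phi\big)$$
in coordinates $(\theta,r,\phi)$ with $K=\{r=0\}$. In this model both contact structures are close to the meridional disk fibration, and their characteristic foliations on $\partial N(K)$ are linear foliations of slopes $\pm$ something small. Then perform the surgery of slope $s$. We remove $N(K)$ and glue in a solid torus $V$ whose meridian has slope $s$, with core $K'$. On $V$ we define a model positive contact pair: the standard tight neighborhoods of a transverse core, one positive and one negative, each twisting so as to match the boundary characteristic foliations. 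The model must be a positive pair transverse to a vector field $Z'$ along the core that glues to $Z$. The role of $|s|\ge s_0$ is this: when the meridian slope is far from the slopes of the characteristic foliations, i.e.\ close to the direction of $K$, a positive and a negative contact structure can both be found on $V$ that extend the boundary germs, stay positively transverse to a common vector field, and keep the core transverse. Concretely, I would write
$$\xi'_\pm=\ker\big(f(r)\,d\theta' \pm g_\pm(r)\,d\phi'\big)$$
and solve for the profile curves $(f,g_\pm)$. The contact conditions give winding conditions on these curves. The positivity of the pair imposes that $\xi'_-$ and $\xi'_+$ do not cross the wrong way, and this is possible exactly when the required total twisting is small, i.e.\ $|s|$ large. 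The threshold $s_0$ depends on $\epsilon$, which in turn depends on $\mathcal{F}$ and $K$.

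Then establish tautness of the new pair. I would check criterion (3) of Proposition~\ref{prop:strongtight}: extend $\omega$ across $V$ as a closed $2$-form positive on $\xi'_\pm$. On $N(K)$ I would first isotope $\omega$ to the model $c\,r\,dr\wedge d\phi$, which is positive on both model planes near the disk direction. The obstruction to extending across $V$ is the flux of $\omega$ through the meridian disk of $V$, which lies in $H^2$ of the boundary torus. A cleaner route is criterion (2): every point of $V$ lies on a transverse arc running along $Z'$ close to the core, which exits $V$ and then closes up in $M\setminus N(K)$ using tautness there. This requires showing that $Z'$-orbits in $V$ enter and exit through $\partial V$ in a controlled way, which follows from the model. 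Applying Theorem~\ref{thm:mainthm} (both structures are tight, since the pair is taut) then produces a taut $C^0$-foliation $\mathcal{F}'$ transverse to $Z'$, and hence transverse to the core $K'$, which is tangent to $Z'$.

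I expect the main obstacle to be the construction of the model pair on $V$. Simultaneously satisfying the boundary matching, the contact conditions with opposite signs, positivity of the pair, and tautness, all in one explicit rotationally symmetric ansatz, is delicate. Estimating $s_0$ in terms of the twisting $\epsilon$ allowed by the Eliashberg--Thurston approximation near $K$ is also delicate. I would first attempt to use a Lutz-type profile computation in the $(f,g)$-plane.
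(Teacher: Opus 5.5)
Your route matches the paper's: approximate $\mathcal{F}$ by a taut positive pair with $K$ positively transverse, do the surgery on the pair rather than on the foliation, check that the new pair is positive and taut, and apply Theorem~\ref{thm:mainthm} with a transverse vector field tangent to the new core.

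The gap is in the step that carries the content, the model pair on $V$. Your heuristic for why $|s|\geq s_0$ is needed is inverted. For $\xi_\pm=\ker(d\theta\pm\epsilon r^2\,d\phi)$, the characteristic foliation of $\xi_\pm$ on $\{r=r_0\}$ is directed by $\partial_\phi\mp\epsilon r_0^2\,\partial_\theta$. In the surgery convention (product framing) its slope is therefore $\mp 1/(\epsilon r_0^2)$, so both foliations are nearly \emph{meridional}. Large $|s|$ means the new meridian is close to the old meridian $\mu$. It is not close to the direction of $K$, which has slope $0$ and corresponds to small $|s|$.

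Here is what actually happens in your rotationally symmetric ansatz. Write $\xi'_\pm=\ker c_\pm(r)$, with $c_\pm(r)$ constant-coefficient $1$-forms on the torus.
\begin{itemize}
\item $\xi'_+$ is positive contact iff $c_+$ rotates strictly one way as $r$ increases, and $\xi'_-$ is negative contact iff $c_-$ rotates strictly the other way.
\item Both must start at the covector $c_m$ killing the new meridian, signed to be positive on the new core; this is the standard form at the core.
\item Since both planes contain $\partial_r$, the pair is positive iff $c_+(r)$ and $c_-(r)$ are never antipodal. So their relative rotation must stay below $\pi$.
\item They must end (up to scaling) at $d\theta\pm\epsilon r_0^2\,d\phi$. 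This is possible iff $c_m$ lies in the short arc between these two covectors, the arc containing $d\theta$.
\end{itemize}
Equivalently, the new meridian must lie in the thin sector of slopes between $\mp 1/(\epsilon r_0^2)$ that contains $\mu$, i.e.\ $|s|>1/(\epsilon r_0^2)$. In the regime you describe (new meridian near the direction of $K$), the required relative rotation exceeds $\pi$, and no positive pair of this form exists. The threshold $s_0$ comes from positivity of the pair, and you need this computation, not the one your heuristic suggests.

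Three smaller points.
\begin{itemize}
\item A single local isotopy does not in general put both $\xi_\pm$ in standard form near $K$ at once. You need a local modification of the pair, checked to preserve positivity and tautness.
\item For criterion (3) there is no flux obstruction. $\int_{\partial N(K)}\omega=0$ because $\partial N(K)$ bounds in $M$, and closed extensions over $V$ can have any flux through the new meridian disk, since $H^2(V,\partial V;\R)\cong\R$.
\item Criterion (2) is indeed cleaner, but your closing step needs fixing. Since $c_\pm(r)$ are never antipodal, at each radius there is a torus direction $w$ with $c_\pm(r)(w)>0$. Then $w+a\,\partial_r$ is positively transverse to both for every $a\in\R$, so you can move radially at will. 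You cannot just close up ``in $M\setminus N(K)$ using tautness there'', because transverse loops for the old pair may pass through $N(K)$. Instead, keep a collar of the old pair inside $N(K)$, do the surgery on a thinner tube, and reroute those portions through the collar.
\end{itemize}
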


As a corollary, we obtain a generalization of the main result of~\cite{LR14}, with a somewhat easier and more natural proof. 

Let $K \subset S^3$ be a nontrivial knot. We denote by $\mathcal{S}_K \subset \Q$ the set of rational slopes $s \in \Q$ such that there exists a taut $C^0$-foliation on $S^3 \setminus K$ which intersects the boundary of a tubular neighborhood of $K$ transversally along a foliation by closed curves of slope $s$. By a celebrated theorem of Gabai~\cite{G87}, $\mathcal{S}_K$ contains $0$. By~\cite[Theorem 1.1]{LR14}, $\mathcal{S}_K$ contains a neighborhood of $0$. We actually get:

\begin{cor} $\mathcal{S}_K$ is an open subset of $\Q$.
\end{cor}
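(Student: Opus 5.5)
Fix $s \in \mathcal{S}_K$, realized by a taut $C^0$-foliation $\mathcal{F}_0$ on $S^3 \setminus \nu(K)$ meeting $T = \partial \nu(K)$ transversally in a foliation by closed curves of slope $s$. I will show that every rational slope close enough to $s$ lies in $\mathcal{S}_K$. The plan is to cap off and reduce to Theorem~\ref{thm:transfol}.

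Glue a solid torus $V$ to $S^3 \setminus \nu(K)$ so that the meridian of $V$ has slope $s$. The result is the closed manifold $N = S^3_K(s)$. Since the slope-$s$ curves on $T$ bound meridian disks in $V$, the foliation $\mathcal{F}_0$ extends over $V$ by the product foliation by meridian disks. This gives a $C^0$-foliation $\mathcal{F}$ on $N$, smoothed along $T$ if necessary. The core $K'$ of $V$ is transverse to $\mathcal{F}$.

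Next I check the hypotheses of Theorem~\ref{thm:transfol}.
\begin{itemize}
\item \emph{Tautness.} Points of $S^3 \setminus \nu(K)$ lie on transverse loops by assumption. Points of $V$ lie on arcs transverse to the disks and hitting $T$, and these arcs can be closed up through the exterior using the tautness of $\mathcal{F}_0$. Equivalently, $\mathcal{F}$ has no dead-end components, by Proposition~\ref{prop:tautequiv}.
\item \emph{Asphericity.} If $\mathcal{F}$ had a sphere leaf, then $N = S^1 \times S^2$ with the product foliation. The restriction of this foliation to the exterior would meet $T$ in curves bounding disks in the exterior, contradicting the nontriviality of $K$.
\end{itemize}
I expect this step, tautness and asphericity of the capped-off foliation (including the case $s=0$ with $N$ possibly having $b_1>0$), to need the most care. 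I would handle it with the dead-end characterization.

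Now choose a framing of $K'$. A natural choice makes $\lambda'$ correspond to a slope-$s'$ curve on $T$ with $\Delta(s,s')=1$, while $\mu'$, the meridian of $K'$, is the slope-$s$ curve. Theorem~\ref{thm:transfol} gives $s_0$ such that for every $r \in \Q$ with $|r| \ge s_0$, the manifold $N_{K'}(r)$ carries a taut $C^0$-foliation $\mathcal{F}'$ transverse to the image of $K'$. Now $N_{K'}(r) = S^3_K(t(r))$, where $t(r)$ is the slope on $T$ given by $p\mu' + q\lambda'$ with $r = p/q$. As $|r| \to \infty$, the slope $t(r)$ converges to $s$, and the set $\{t(r) : |r| \ge s_0\}$ is exactly the set of rationals in a neighborhood of $s$ minus $s$ itself, via a Möbius transformation in $\mathrm{PSL}_2(\Z)$.

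It remains to land back in $\mathcal{S}_K$. The foliation $\mathcal{F}'$ is transverse to the new core $K''$. By transversality, after an isotopy a small tubular neighborhood of $K''$ is foliated by meridian disks of $K''$, and these meridians have slope $t(r)$ on $T$. Removing this neighborhood gives a taut foliation on $S^3 \setminus \nu(K)$ meeting the boundary torus transversally in closed curves of slope $t(r)$. Tautness survives because transverse loops can be pushed off $K''$, or directly because no dead-end components are created. Hence $t(r) \in \mathcal{S}_K$. Together with $s$ itself, this gives a neighborhood of $s$ in $\Q$ contained in $\mathcal{S}_K$, so $\mathcal{S}_K$ is open.
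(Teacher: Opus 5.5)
Your proposal is correct and follows the paper's route, which it describes only as a ``brutal application'' of Theorem~\ref{thm:transfol}: cap off with meridian disks to get a taut foliation on $S^3_K(s)$ transverse to the core $K'$, apply Theorem~\ref{thm:transfol} to $K'$, and use that large surgery coefficients on $K'$ correspond, via an integral M\"obius transformation, to the rational slopes near $s$ on $K$, which you recover by drilling out the new transverse core. One small repair is needed in the asphericity step, because a sphere leaf could a priori meet $V$ in several meridian disks rather than one: either observe that $S^1\times S^2$ forces $s=0$, that $[K']$ then generates $H_1(S^3_K(0))\cong\Z$ and meets the sphere only positively, hence exactly once, or simply invoke Gabai's Property R.
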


We obtain this corollary as a rather brutal application of Theorem~\ref{thm:transfol} for which we have little control on the number $s_0$ in such generality. Instead, it would be interesting to construct a concrete contact pair adapted to $K$ and carefully analyze the slopes of the characteristic foliations of the contact structures on the boundary of a (potentially large) solid torus containing $K$. 

\subsubsection{Surgery along transverse flows}

The ideas from the previous paragraph can be generalized to other situations, in particular in the presence of a suitable flow transverse to the taut foliation.

A folklore conjecture, attributed to Gabai and Mosher in the \emph{finite depth} case, states that every cooriented taut foliation on a closed $3$-manifold is (almost) transverse to a pseudo-Anosov flow. Roughly speaking, a pseudo-Anosov flow is a hyperbolic flow with some \emph{singular} orbits where the weak foliations exhibit \emph{prong singularities}. See also~\cite{C06} for some related results and notions, in particular the \emph{pseudo-Anosov packages}.

Motivated by this philosophy, it is natural to fix a transitive pseudo-Anosov flow $\Phi$ (e.g., the suspension of the monodromy of a hyperbolic fibered link in $S^3$) and study its transverse taut foliations. To do so, we first remove tubular neighborhoods of the singular orbits, as well as finitely many more orbits to achieve the ``no perfect fits'' technical condition. The latter implies that on the resulting manifold with boundary $\check M$, every taut foliation transverse to the flow is supported by a single branched surface coming from a \emph{veering triangulation}~\cite{Z24}.

The boundary components of $\check M$ are tori with preferred framings induced by the flow, and we can encode the multislopes along those by an element of $(\R \cup \{\pm \infty\})^n$, where $n$ denotes the number of boundary components of $\check M$. The $\pm \infty$ slopes are called the \emph{degeneracy slopes}. We now define the set $\mathcal{Z} \subset \R^n$ as the set of (nondegeneracy) multislopes that can be realized by a taut foliation transverse to the flow in $\check M$. Borrowing terminology from~\cite{CW}, we call $\mathcal{Z}$ a \textbf{ziggurat}. See Figure~\ref{fig}.

In~\cite{MZ}, we will prove some structural properties for the set $\mathcal{Z}$. For simplicity, we state a weaker result under some technical (yet fairly general) assumption.

\begin{thm}[M.--Zung,~\cite{MZ}]
    Assume that there is no compact genus $0$ surface in $\check M$ with boundary in $\partial \check M$ which is transverse to $\Phi$ and with some boundary with degeneracy slope. Then the following hold:
    \begin{enumerate}
        \item (Closedness) $\mathcal{Z} \subset \R^n$ is closed.
        \item (Box-convexity) $\mathcal{Z}$ is \emph{box-convex}: if $s=(s_1, \dots, s_n) \in \mathcal{Z}$ and $t=(t_1, \dots, t_n) \in \mathcal{Z}$ satisfy $s_i \leq t_i$, $1 \leq i \leq n$, then 
        $$[s_1, t_1] \times \dots \times [s_n, t_n] \subset \mathcal{Z}.$$
        \item (Rationality) If $s=(s_1, \dots, s_n) \in \mathcal{Z}$ and $s_1 \in \R \setminus \Q$, then there is an $\epsilon > 0$ such that
        $$(s_1 - \epsilon, s_1 + \epsilon) \times \{(s_2, \dots, s_n)\} \subset \mathcal{Z}.$$
        \item (Rational fillings) Let $s \in \Q^n$. If $s \in \mathrm{int} \mathcal{Z}$, then the $s$-multislope Dehn filling of $\check M$ admits a taut foliation transverse to the flow $\Phi_s$ induced by $\Phi$. 
        
        Conversely, if the $s$-multislope Dehn filling of $\check M$ admits a taut foliation transverse to $\Phi_s$, and this foliation is different from the product foliation by spheres on $S^1 \times S^2$,\footnote{Otherwise, $\mathcal{Z}$ is reduced to a point!} then $s \in \mathrm{int} \mathcal{Z}$.
        \item (Slippery points) If $s = (s_1, \dots, s_n) \in \mathcal{Z}$ is an accumulation point of extremal points, then either $s \in \Q^n$ and the $s$-multislope Dehn filling of $\check M$ has an $S^1 \times S^2$ summand, or at least two entries in $s$ are irrational and the Dehn filling of $\check M$ along the boundaries with rational entries has a $\mathbb{T}^2 \times [0,1]$-summand.
    \end{enumerate}
\end{thm}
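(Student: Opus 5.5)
The plan is to translate every statement about taut foliations transverse to $\Phi$ on $\check M$ into a statement about \emph{taut contact pairs} $(\xi_-,\xi_+)$ positively transverse to the flow direction, and then move back using Theorem~\ref{thm:mainthm} together with its relative version on manifolds with torus boundary. The first step is to set up a dictionary on $\check M$.

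\textbf{Forward direction.} Given a taut foliation $\mathcal{F}$ transverse to $\Phi$ realizing a multislope $s$, the Eliashberg--Thurston approximation (in the $C^0$ versions of Kazez--Roberts and Bowden) yields a taut contact pair transverse to $\Phi$. Its characteristic foliations on each boundary torus are $C^0$-close to the linear foliation of slope $s_i$.

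\textbf{Backward direction.} Given a taut contact pair on $\check M$ transverse to $\Phi$ whose boundary characteristic foliations have slopes in a controlled interval, the unstable plane field $\eta$ of Proposition~\ref{prop:limits} is tangent to $\Phi$'s transverse direction. After running the branching-foliation machinery, one gets a taut foliation whose boundary slope lies between the slopes of $\xi_-$ and $\xi_+$ on each torus. The boundary should be treated as invariant under $X$, so the induced foliation on each torus is a suspension whose rotation number is squeezed between the contact slopes.

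The key flexibility is that near $\partial\check M$ we can twist $\xi_\pm$ independently (adding or removing boundary-parallel twisting) while keeping the pair positive and taut. This should let us realize any slope in the open interval bounded by the two contact slopes.

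With this dictionary, I would prove the items in the order (3), (2), (4), (1), (5).

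For \emph{rationality} (3), note that an irrational boundary slope forces the boundary foliation to have no closed leaves. In that case the contact approximations have slopes on both sides of $s_1$, and the sandwich principle gives an interval.

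For \emph{box-convexity} (2), take foliations realizing $s$ and $t$. Since tautness transverse to $\Phi$ is governed by the single veering branched surface of~\cite{Z24}, both are carried by it, so their contact approximations can be chosen with common support. Mixing boundary twisting torus by torus, taking the $\xi_-$ from one and the $\xi_+$ from the other near each boundary component, produces pairs whose slope windows cover any product of intervals.

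For \emph{rational fillings} (4), an interior rational $s$ lets us Dehn fill the contact pair, as in Theorem~\ref{thm:transfol}, by gluing standard pairs on solid tori transverse to the core orbits. We then apply Theorem~\ref{thm:mainthm}. Conversely, a foliation on the filling that is not $S^1\times S^2$ is aspherical, so its contact approximations restrict to $\check M$ and have slope windows of positive width around $s$.

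For \emph{closedness} (1), take limits of foliations carried by the fixed branched surface, which are given by weights and laminations with compactness. The hypothesis excluding genus $0$ transverse surfaces with a degenerate boundary slope is what prevents the limit from degenerating. Such a surface would allow slopes to escape to the degeneracy slope, or produce spheres or disks of tangency in the limit. The reflex would be to use the disk-of-tangency criterion (tightness) here.

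The main obstacle is \emph{slippery points} (5), and the backward half of the dictionary with quantitative slope control, since the leaves produced by Theorem~\ref{thm:mainthm} are uncontrolled. For (5), I would argue that if extremal points accumulate at $s$ and the slope window collapses, then the contact pairs must degenerate. In that case $\xi_-$ and $\xi_+$ become nearly equal near some boundary components. This should force a transverse sphere, giving an $S^1\times S^2$ summand, or a product torus region, giving a $\mathbb{T}^2\times[0,1]$ summand. Identifying which case occurs from the rationality pattern of $s$, via (3), is where I expect the real difficulty.
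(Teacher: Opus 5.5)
The paper gives no argument for this theorem beyond naming two ingredients: an Eliashberg--Thurston theorem on $\check M$ with \emph{precise} control of boundary slopes and characteristic foliations, and a version of Theorem~\ref{thm:mainthm} for pairs of a specific form along $\partial\check M$. Your dictionary has that shape. The genuine gap is the flexibility step that drives (2)--(4).

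Suppose boundary-parallel twisting of $\xi_\pm$ could be added \emph{or removed} at will. Then, starting from approximations of any foliation realizing $s$, you could put the two boundary slopes on either side of each $s_i$ and, by your sandwich, realize a whole neighbourhood of $s$. Every point of $\mathcal{Z}$ would then be interior, so $\mathcal{Z}$ would be open with no corners at all, unlike the ziggurat of Figure~\ref{fig}.

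What is true is one-sided. In a collar, the contact condition lets you twist $\xi_+$ in one rotational sense, and $\xi_-$ in the opposite sense, as much as you like. In the other direction you can move only by the small amount of twisting already present. You also cannot shave off a collar, because parallel tori are not $\Phi$-invariant. This is the phenomenon behind the paper's remark on irrational foliations of $\mathbb{T}^2\times[0,1]$.

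Hence a window around $s_i$ needs the boundary slopes of $\xi_+$ and $\xi_-$ to lie strictly on the sides they \emph{cannot} reach freely. ``$C^0$-close to slope $s_i$'' gives no such ordering. Producing it is the real content of (3). At an irrational rotation number there is no phase locking (for lifts of circle maps, $f<g$ and $\rho(f)\notin\Q$ imply $\rho(f)<\rho(g)$). So the small permitted untwisting moves each slope strictly to the required side of $s_1$. By contrast, at a rational slope whose boundary foliation has a hyperbolic closed leaf, small moves do not change the slope at all. In your plan irrationality plays no role, which is the symptom.

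Also, if a boundary torus is $X$-invariant, as you propose, then $\xi_-$ and $\xi_+$ have the \emph{same} characteristic foliation there. This is because $X$ spans $\xi_-\cap\xi_+$ and both are transverse to the torus. So nothing is squeezed: the output slope is the common one.

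For (2), mixing ``near each boundary component'' would require interpolating, in the interior, between approximations of two unrelated foliations. That is not available, and the veering branched surface does not help. But no interpolation is needed. Let $\xi_+$ approximate the foliation realizing $s$ and $\xi_-$ the one realizing $t$ (or the reverse, according to conventions), on \emph{all} of $\check M$.
\begin{itemize}
\item This is a positive pair, since $\Phi$ is positively transverse to both.
\item It is taut, since $\Phi$ is transitive (use the path criterion of Proposition~\ref{prop:strongtight}).
\item With exact boundary control and one-sided twisting, this single global choice gives the window $[s_i,t_i]$ at every torus simultaneously.
\item The same trick applied to $s\pm\epsilon(1,\dots,1)\in\mathcal{Z}$ produces the pair you need to fill in (4).
\end{itemize}

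Finally, (1) and (5) are not really argued. For (1), you never identify the limiting object (weights parametrize measured laminations, not taut foliations), nor say how it yields a foliation with the limiting slopes. The disk-of-tangency criterion cannot be where the hypothesis enters: every pair here is taut, hence tight, so disks of tangency never occur.

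For (5), ``$\xi_-$ and $\xi_+$ become nearly equal'' carries no information, since approximations of a single foliation are always $C^0$-close to each other. You would have to show that an accumulation of extremal points forces one of the two genuinely non-approximable pieces: the sphere fibration of $S^1\times S^2$, or an irrational product foliation of $\mathbb{T}^2\times[0,1]$. Nothing in the plan produces them.
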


In the last item, the case of the $S^1 \times S^2$-summand should not be a surprise, since it corresponds to the only manifold carrying a taut foliation that cannot be approximated by contact structures. The case of the $\mathbb{T}^2 \times [0,1]$-summand might seem a bit more mysterious, and arises when considering irrational foliations on $\mathbb{T}^2 \times [0,1]$ (namely, the product of an irrational foliation on $\mathbb{T}^2$ with $[0,1]$). Such a foliation cannot be approximated by a positive (or negative) contact structure whose boundary slopes are both ``better'' than the foliation on the two boundary components.

The main technical ingredients of the proof are:

\begin{itemize}
    \item A generalization of the Eliashberg--Thurston theorem for manifolds with boundary, with a \emph{precise control} on the slopes and characteristic foliations of the contact approximations along the boundary,
    \item A generalization of Theorem~\ref{thm:mainthm} for manifolds with boundary, for contact pairs with a specific form along the boundary.
\end{itemize}

\begin{figure}[!ht]
    \centering
    \includegraphics[width=1\linewidth]{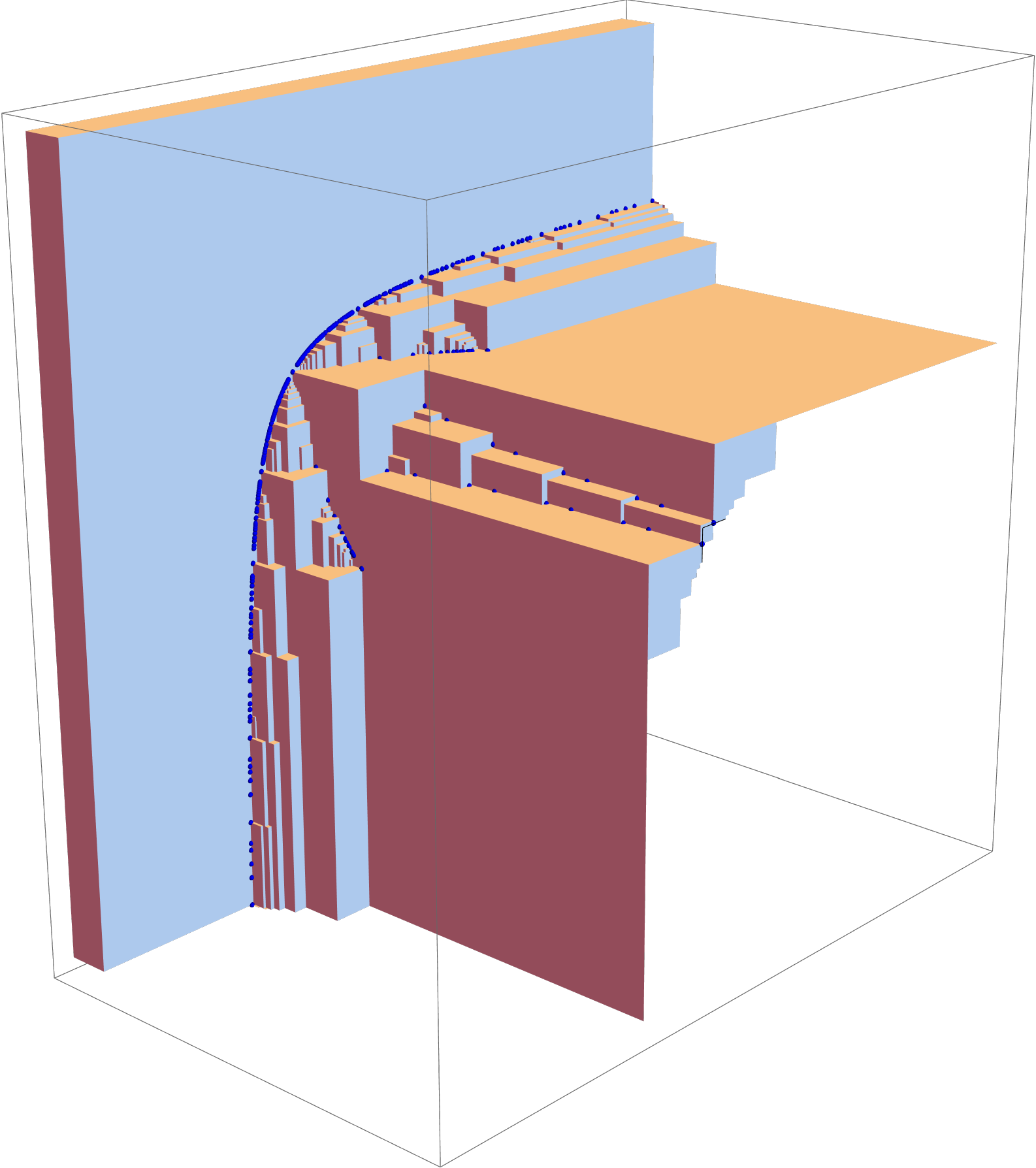}
    \caption{A ziggurat in $\R^3$ for the complement of the fibered link $L6a5$. The blue dots correspond to $S^1 \times S^2$ surgeries. The hyperbola  corresponds to ``matching slopes'' on a $\mathbb{T}^2 \times [0,1]$ surgery. Image courtesy of Jonathan Zung.}
    \label{fig}
\end{figure}

    \subsection{Towards a contact $L$-space conjecture}

In this section, we propose some conjectures and speculations around the celebrated $L$-space conjecture, and we outline a contact viewpoint towards this conjecture.

\subsubsection{The $L$-space conjecture}

It is an open problem to determine which irreducible rational homology $3$-spheres carry a taut foliation. The famous $L$-space conjecture provides a hypothetical answer:

\begin{conj*}[$L$-space conjecture~\cite{BGW,J15}]
Let $M$ be an irreducible rational homology $3$-sphere. The following are equivalent:
\begin{enumerate}
    \item $M$ carries a cooriented taut foliation,
    \item $\pi_1(M)$ is left-orderable,
    \item $M$ is not an $L$-space.
\end{enumerate}
\end{conj*}

The second item can be thought of as an algebraic characterization, while the third one comes from gauge theory. Heegaard Floer homology is an invariant of $3$-manifolds constructed by Ozsváth and Szabó in the early 2000s, and a $3$-manifold $M$ is an $L$-space if its `hat' Heegaard Floer homology is as small as possible, namely
$$\mathrm{rank} \widehat{HF}(M) = \vert H_1(M, \Z)\vert,$$
or alternatively if its reduced Heegaard Floer homology vanishes, $HF^\mathrm{red}(M) = 0$. Lens spaces are examples of $L$-spaces.

In~\cite{OS04}, Ozsváth and Szabó proved the implication $1 \implies 3$ by showing that the \emph{contact class} of a contact approximation to the foliation provided by the Eliashberg--Thurston theorem is nonzero in $HF^\mathrm{red}$. Interestingly, contact structures seem to have better interactions with gauge theory than foliations.

\smallskip

One key difficulty to reverse Ozsváth--Szabó's result and show that non-$L$-space irreducible rational homology spheres admit taut foliations is that very few general strategies are available to construct taut foliations directly from algebraic or gauge-theoretic assumptions. To our knowledge, all the known cases where the equivalence $1 \iff 3$ holds were obtained by rather ad hoc arguments, by considering a class of $3$-manifolds and independently determining which are $L$-spaces and which admit a taut foliation. See for instance~\cite{BC17, R17, RR17, HRRW20} for all graph manifolds, \cite{D20} for a large census of hyperbolic manifolds, and~\cite{K20, K23, S23a, S23b, S23c} for $3$-manifolds obtained by Dehn surgeries on suitable knots and links in $S^3$. Moreover, these approaches are quite combinatorial and perhaps a bit miraculous in nature. Instead, we propose a general program to construct Reebless and taut foliations that remains closer to the argument of Ozsváth--Szabó.

\subsubsection{Constructing Reebless foliations}

While we are able to construct taut foliations from taut contact pairs, it would be desirable to construct `rigid' foliations from more general pairs, under more flexible hypotheses. For instance, it would be natural to consider positive contact pairs made of tight contact structures, called \textbf{tight} positive contact pairs. Recall that Reebless foliations are approximated by such tight pairs. We expect the following to hold:

\begin{conj}
If $M \neq S^1 \times S^2$, then $M$ carries a Reebless $C^0$-foliation if and only if it carries a tight positive contact pair.
\end{conj}

The main difficulty comes from the lack of regularity of the unstable plane field $\eta$, and the fact that Reeblessness is \emph{not} a natural property for plane fields (see the \emph{phantom Reeb components} from~\cite{CKR19}). Notice that this conjecture would immediately imply that on an \emph{atoroidal} closed $3$-manifold, the existence of a tight positive contact pair is equivalent to the existence of a taut foliation.

\subsubsection{Removing the positivity condition}

Our construction of foliations from contact structures requires our two contact structures to be in a specific geometric position with respect to each other, namely, to form a positive pair. However, it is hard to construct such positive pairs in practice. It would be more convenient to work with pairs of contact structures which are not necessarily positive, and impose conditions that are \emph{intrinsic} for both contact structures independently, without any condition on their geometric interaction.

A contact pair whose contact structures are homotopic as plane fields can always be deformed into a positive contact pair when one of its contact structures is overtwisted. However, there is little hope of constructing geometrically interesting foliations from such a pair. Upgrading tight contact pairs to positive ones seems much more difficult. At the very least, we should further assume that the contact structures are homotopic as oriented plane fields, but this might not be enough. Indeed, Lin recently remarked that the existence of a taut foliation on a rational homology sphere imposes more constraints on the (monopole) Floer homology of the manifold, see~\cite{L23}. In particular, the $\mathbb{F}[U]$-module $HM_*$ has a \emph{direct $\mathbb{F}$-summand}, where $\mathbb{F} = \Z \slash 2\Z$. Moreover, the \emph{contact invariants} of the contact approximations of the foliation have to pair to $1$, for the natural perfect pairing
$$\langle \, \cdot \, , \, \cdot \, \rangle : HM_*(M) \otimes HM_*(-M) \longrightarrow \mathbb{F}$$
induced by the Poincar\'{e} duality isomorphism $HM^*(M) \cong HM_*(-M)$. Motivated by this result, we say that a contact pair $(\xi_-, \xi_+)$ on $M$ is \textbf{algebraically tight} if the contact invariants $c(\xi_\pm) \in HM(\mp M)$ satisfy
$$\big \langle c(\xi_-), c(\xi_+)\big\rangle = 1.$$

Notice that if $(\xi_-, \xi_+)$ is algebraically tight, then $\xi_-$ and $\xi_+$ are tight and homotopic as oriented plane fields. Indeed, the contact class of an overtwisted contact structure vanishes, and the grading of the contact class corresponds to the homotopy class of the contact structure (as an oriented plane field). We propose:

\begin{conj}
Let $(\xi_-, \xi_+)$ be an algebraically tight contact pair on $M$. Then $(\xi_-, \xi_+)$ is homotopic through contact pairs to a positive one.
\end{conj}

Here, we think of the contact classes as potential \emph{obstructions} to deforming a tight contact pair into a positive one. One possible approach to this conjecture would be to consider the locus $\Delta_- \subset M$ where $\xi_-$ and $\xi_+$ coincide with \emph{opposite} orientations. It is generically an embedded link which is null-homologous if $\xi_-$ and $\xi_+$ are homotopic. Perhaps one could study the link Floer or sutured Floer homology of the pair $(M, \Delta_-)$ and use the algebraic tightness hypothesis to precisely understand the topology of $\Delta_-$. Additionally, there could be `elementary moves' that would inductively simplify $\Delta_-$. Some obstruction to performing these moves could be determined by Floer-theoretic invariants of the contact pair.

\smallskip

It remains to understand how to construct suitable tight contact structures on rational homology spheres. This is of course a very delicate problem, but perhaps more tractable than constructing taut foliations. We ask:

\begin{question}[Contact realization] Assume that $M$ is an irreducible rational homology sphere.
\begin{itemize}
    \item Which nonzero elements in the Floer homology of $M$ can be realized as the contact class of a (necessarily tight) contact structure?
    \item If the Floer homology of $M$ has a direct $\mathbb{F}$-summand, does $M$ carry an algebraically tight contact pair?
\end{itemize}
\end{question}

Assuming the previous two conjectures, a positive answer to the second item of this question would imply:

\begin{conj}
If $M$ is an irreducible rational homology sphere such that $HM_*(M)$ has a direct $\mathbb{F}$-summand, then $M$ carries a Reebless $C^0$-foliation.
\end{conj}

Recently, Alfieri and Binns verified the existence of a direct $\mathbb{F}$-summand in Floer homology for a large class of non-$L$-space irreducible rational homology spheres~\cite{AB24}.

Note that it is not yet known if every non-$L$-space irreducible rational homology sphere admits \emph{any} tight contact structure!


\bibliographystyle{amsplain}
\bibliography{bibli}

@article{Z21,
    AUTHOR = {Zung, Jonathan},
     TITLE = {Reeb flows transverse to foliations},
   JOURNAL = {Geometry \& Topology},
    VOLUME = {28},
      YEAR = {2024},
    NUMBER = {8},
     PAGES = {3661--3695}
}

@misc{Mas24,
    author = {Massoni, Thomas},
    title = {Taut foliations and contact pairs in dimension three},
    note = {\url{https://arxiv.org/abs/2405.15635}},
    year = {2024}
}

@book{ET,
    title = {Confoliations},
    author = {Eliashberg, Yakov and Thurston, William},
    isbn = {978-0-8218-0776-7},
    series = {University Lecture Series},
    volume = {13},
    year = {1998},
    publisher = {American Mathematical Society}
}

@article{CF11,
    author = "Colin, Vincent and Firmo, Sebasti{\~a}o",
    title = "{Paires de structures de contact sur les vari{\'e}t{\'e}s de dimension trois.} ({French})",
    journal = "Algebraic {\&} Geometric Topology",
    volume = "11",
    number = "5",
    pages = "2627--2653",
    year = "2011"
}

@article{M95,
    author = "Mitsumatsu, Yoshihiko",
    title = "{Anosov flows and non-Stein symplectic manifolds}",
    journal = "Annales de l'Institut Fourier",
    volume = "45",
    number = "5",
    pages = "1407--1421",
    year = "1995"
}

@article{BI08,
    author = "Burago, Dmitri and Ivanov, Sergei",
    title = "{Partially hyperbolic diffeomorphisms of $3$-manifolds with Abelian fundamental groups}",
    journal = "Journal of Modern Dynamics",
    volume = "2",
    number = "4",
    pages = "541--580",
    year = "2008"
}

@article{LR14,
	author = "Li, Tao and Roberts, Rachel",
	journal = "Pacific Journal of Mathematics",
	pages = "149--168",
	title = "{Taut foliations in knot complements}",
    number = "1",
	volume = "269",
	year = "2014"
}

@article{G87,
	author = "Gabai, David",
	journal = "Journal of Differential Geometry",
	pages = "479--536",
	title = "{Foliations and the topology of $3$-manifolds. III}",
    number = "3",
	volume = "26",
	year = "1987"
}

@book{CC1,
    title = {Foliations {I}},
    author = {Candel, Alberto and Conlon, Lawrence},
    isbn = {978-0-8218-0809-2},
    series = {Graduate Studies in Mathematics},
    volume = {23},
    year = {2000},
    publisher = {American Mathematical Society}
}

@book{CC2,
    title = {Foliations {II}},
    author = {Candel, Alberto and Conlon, Lawrence},
    isbn = {978-0-8218-0881-8},
    series = {Graduate Studies in Mathematics},
    volume = {60},
    year = {2003},
    publisher = {American Mathematical Society}
}

@article{KR17,
	author = "Kazez, William and Roberts, Rachel",
	journal = "Geometry \& Topology",
	pages = "3601--3657",
	title = "{$C^0$ approximations of foliations}",
    number = "6",
	volume = "21",
	year = "2017"
}

@article{CKR19,
	author = "Colin, Vincent and Kazez, William and Roberts, Rachel",
	journal = "Communications in Analysis and Geometry",
	pages = "357--375",
	title = "{Taut foliations}",
    number = "2",
	volume = "27",
	year = "2019"
}

@article{KR19,
	author = "Kazez, William and Roberts, Rachel",
	journal = "Algebraic \& Geometric Topology",
	pages = "2763--2794",
	title = "{$C^{1,0}$ foliation theory}",
    number = "6",
	volume = "19",
	year = "2019"
}

@inproceedings{CH20,
    author = "Colin, Vincent and Honda, Ko",
    title = "{Foliations, contact structures and their interactions in dimension three}",
    booktitle = "Surveys in $3$-Manifold Topology and Geometry",
    series = "Surveys in Differential Geometry",
    volume = "25",
    pages = "71--101",
    publisher = "International Press",
    editor = "Agol, Ian and Gabai, David",
    year = "2020"
}

@article{B16b,
	author = "Bowden, Jonathan",
	journal = "Journal of Differential Geometry",
	pages = "219--237",
	title = "{Contact perturbations of Reebless foliations are universally tight}",
    number = "2",
	volume = "104",
	year = "2016"
}

@article{C02,
	author = "Colin, Vincent",
	journal = "Topology",
	pages = "1017--1029",
	title = "{Structures de contact tendues sur les vari\'{e}t\'{e}s toroidales et approximation de feuilletages sans composante de Reeb}",
    number = "5",
	volume = "41",
	year = "2002"
}

@article{E07,
	author = "Etnyre, John",
	journal = "Mathematical Research Letters",
	pages = "775--779",
	title = "{Contact structures on $3$-manifolds are deformations of foliations}",
    number = "5",
	volume = "14",
	year = "2007"
}

@article{E89,
	author = "Eliashberg, Yakov",
	journal = "Inventiones mathematicae",
	pages = "623--637",
	title = "{Classification of overtwisted contact structures on $3$-manifolds}",
    number = "3",
	volume = "98",
	year = "1989"
}

@article{V16,
    author = "Vogel, Thomas",
    title = "{On the uniqueness of the contact structure approximating a foliation}",
    journal = "Geometry \& Topology",
    volume = "20",
    number = "5",
    pages = "2439--2573",
    year = "2016"
}

@article{C01,
    author = "Calegari, Danny",
    title = "{Leafwise smoothing laminations}",
    journal = "Algebraic \& Geometric Topology",
    volume = "1",
    pages = "579--587",
    year = "2001"
}

@book{C07,
    title = {Foliations and the Geometry of $3$-Manifolds},
    author = {Calegari, Danny},
    isbn = {978-0-1985-7008-0},
    series = {Oxford Mathematical Monographs},
    year = {2007},
    publisher = {Oxford University Press}
}

@book{G09,
    title = {An Introduction to Contact Topology},
    author = {Geiges, Hansj\"{o}rg},
    isbn = {978-0-521-86585-2},
    series = {Cambridge Studies in Advanced Mathematics},
    volume = {109},
    year = {2009},
    publisher = {Cambridge University Press}
}

@article{BGW,
	author = "Boyer, Steven and Gordon, Cameron McA and Watson, Liam",
	journal = "Mathematische Annalen",
	pages = "1213--1245",
	title = "{On {L}-spaces and left-orderable fundamental groups}",
    number = "4",
	volume = "356",
	year = "2013"
}

@inproceedings{J15,
    author = "Juhász, András",
    title = "{A survey of Heegaard Floer homology}",
    booktitle = "New ideas in low dimensional topology",
    series = "Series on Knots and Everything",
    volume = "56",
    pages = "237--295",
    editor = "Kauffman, Louis and Manturov, Vassily",
    publisher = "World Scientific",
    year = "2015"
}

@article{OS04,
    author = "Ozsv\'{a}th, Peter and Szab\'{o}, Zolt\'{a}n",
    title = "{Holomorphic disks and genus bounds}",
    journal = "Geometry \& Topology",
    volume = "8",
    number = "1",
    pages = "311--334",
year = {2004}
}

@article{HRRW20,
    author = "Hanselman, Jonathan and Rasmussen, Jacob and Rasmussen, Sarah Dean and Watson, Liam",
    title = "{{L}-spaces, taut foliations, and graph manifolds}",
    journal = "Compositio Mathematica",
    volume = "156",
    number = "3",
    pages = "604--612",
year = {2020}
}

@article{R17,
    author = "Rasmussen, Sarah Dean",
    title = "{{L}-space intervals for graph manifolds and cables}",
    journal = "Compositio Mathematica",
    volume = "153",
    number = "5",
    pages = "1008--1049",
year = {2017}
}

@article{RR17,
    author = "Rasmussen, Jacob and Rasmussen, Sarah Dean",
    title = "{Floer simple manifolds and {L}-space intervals}",
    journal = "Advances in Mathematics",
    volume = "322",
    pages = "738--805",
year = {2017}
}

@article{BC17,
    author = "Boyer, Steven and Clay, Adam",
    title = "{Foliations, orders, representations, {L}-spaces and graph manifolds}",
    journal = "Advances in Mathematics",
    volume = "310",
    pages = "159--234",
year = {2017}
}

@article{K20,
    author = "Krishna, Siddhi",
    title = "{Taut foliations, positive 3-braids, and the {L}-space conjecture}",
    journal = "Journal of Topology",
    volume = "13",
    number = "3",
    pages = "1003--1033",
    year = {2020}
}

@article{K23,
    author = "Krishna, Siddhi",
    title = "{Taut foliations, braid positivity, and unknot detection}",
    journal = "Advances in Mathematics",
    volume = "470", 
    pages = "110233",
    year = {2025}
}

@article{S23a,
    author = "Santoro, Diego",
    title = "{{L}-spaces, taut foliations and the Whitehead link}",
    journal = "Algebraic \& Geometric Topology",
    volume = "24",
    number = "6",
    pages = "3455--3502",
    year = {2024}
}

@article{S23b,
    author = "Santoro, Diego",
    title = "{{L}-spaces, taut foliations and fibered hyperbolic two-bridge links}",
    journal = "Algebraic \& Geometric Topology",
    volume = "26",
    number = "3",
    pages = "1115--1154",
    year = {2026}
}

@article{S23c,
    author = "Santoro, Diego",
    title = "{Taut foliations from knot diagrams}",
    journal = "Advances in Mathematics",
    volume = "492",
    pages = "110906",
    year = {2026}
}

@article{L23,
    author = "Lin, Francesco",
    title = "{A remark on taut foliations and Floer homology}",
    journal = "Mathematical Research Letters",
    volume = "31",
    number = "6",
    pages = "1819--1825",
    year = "2025"
}

@article{D20,
    author = "Dunfield, Nathan",
    title = "{Floer homology, group orderability, and taut foliations of hyperbolic $3$-manifolds.}",
    journal = "Geometry \& Topology",
    volume = "24",
    number = "4",
    pages = "2075--2125",
year = {2020}
}

@misc{MZ,
    author = "Massoni, Thomas and Zung, Jonathan",
    title = "Ziggurats, taut foliations, and contact structures",
    note = "In preparation"
}

@article{CW,
    author = "Calegari, Danny and Walker, Alden",
    title = "{Ziggurats and rotation numbers}",
    journal = "Journal of Modern Dynamics",
    volume = "5",
    number = "4",
    pages = "711--746",
year = {2011}
}

@article{B16a,
    author = "Bowden, Jonathan",
    title = "{Approximating $C^0$-foliations by contact structures}",
    journal = "Geometric and Functional Analysis",
    volume = "26",
    number = "5",
    pages = "1255--1296",
    year = "2016"
}

@misc{BF03,
    author = {Bonatti, Christian and Franks, John},
    title = {A {H}\"{o}lder continuous vector field tangent to many foliations},
    note = {\url{https://arxiv.org/abs/math/0303291}},
    year = {2003}
}

@misc{AB24,
    author = {Alfieri, Antonio and Binns, Fraser},
    title = {Is the geography of {H}eegaard {F}loer homology restricted or the {L}-space conjecture false?},
    note = {\url{https://arxiv.org/abs/2404.00490}},
    year = {2024}
}

@misc{BM,
    author = {Bowden, Jonathan and Massoni, Thomas},
    title = {Topological invariance of {L}iouville structures for taut foliations and {A}nosov flows},
    note = {\url{https://arxiv.org/abs/2510.15325}},
    year = {2025}
}

@misc{Z24,
    author = {Zung, Jonathan},
    title = {Veering triangulations and transverse foliations},
    note = {\url{https://arxiv.org/abs/2411.00227}},
    year = {2024}
}

@article{C06,
    author = "Calegari, Danny",
    title = "{Promoting essential laminations}",
    journal = "Inventiones Mathematicae",
    volume = "166",
    pages = "583--643",
year = {2006}
}

\end{document}